\newtheorem{theorem}{Theorem}[section]
\newtheorem{lemma}{Lemma}[section]
\newtheorem{corollary}{Corollary}[section]
\newtheorem{remark}{Remark}[section]
\newtheorem{algorithm}{Algorithm}[section]
\begin{document}

\title{A Cascadic Multigrid Method for Eigenvalue Problem\thanks{This work is supported in part
by the National Science Foundation of China (NSFC 91330202, 11371026, 11001259,
 11031006, 2011CB309703),  the National
Center for Mathematics and Interdisciplinary Science, CAS and the
President Foundation of AMSS-CAS.
}}

\author{Xiaole Han \thanks{LSEC, NCMIS, Institute
of Computational Mathematics, Academy of Mathematics and Systems
Science, Chinese Academy of Sciences, Beijing 100190,
China ({\tt hanxiaole@lsec.cc.ac.cn})}\ \ \ and\ \
Hehu Xie \thanks{LSEC, NCMIS, Institute
of Computational Mathematics, Academy of Mathematics and Systems
Science, Chinese Academy of Sciences, Beijing 100190,
China ({\tt hhxie@lsec.cc.ac.cn})}
}

\date{}
\maketitle

\begin{abstract}
A cascadic multigrid method is proposed for eigenvalue problems based on
the multilevel correction scheme. With this new scheme, an eigenvalue
problem on the finest space
can be solved by smoothing steps on a series of multilevel finite
 element spaces and
 eigenvalue problem solving on the coarsest finite element space.
 Choosing the appropriate sequence of finite element spaces and the number of
 smoothing steps, the optimal convergence rate with the optimal
  computational work can be
 arrived. Some numerical experiments are presented to validate
 our theoretical analysis.

{\bf Key Words.}
Eigenvalue problem; cascadic multigrid; multilevel correction scheme;
finite element method.

\vskip0.2cm {\bf AMS subject classifications.} 65N30, 65N25, 65L15,
65B99.
\end{abstract}

\section{Introduction}\label{sec;introduction}
The cascadic multigrid method proposed by \cite{BornemannDeuflhard}
and analyzed by \cite{Shaidurov1996CCG} is based on a hierarchy of
nested meshes.
Going from the lowest level to the highest one, in each level,
the obtained discrete
approximation from the previous level act as the starting value
 of a simple iterative
solver (a smoother) like Conjugate Gradients. As we know,
since a smoother can not reduce the algebraic error from previous
level, this error should already be less than the final desired error.
In cascadic multigrid method, this is achieved by increasing the number
of smoothing iteration steps on lower levels. Fortunately,
the smaller dimensions of the problems
on the lower levels lead to the optimality of the complete algorithm.
Requiring the number of operations which is proportional to the number
of unknowns on the finest level, the algebraic error of the final
approximation solution is of the same order as the discretization error
of the finite element method.

In modern science and engineer, eigenvalue problems appear in many
 fields such as Physics, Chemistry, mechanics and material sciences.
Recently, a type of multilevel correction method is proposed to solve
eigenvalue problems in \cite{LinXie_MultiLevel,Xie_IMA}. In this multilevel
correction scheme, the solution of eigenvalue problem on the final level
mesh can be reduced to a series of solutions of boundary value problems on
the multilevel meshes and a series of solutions of the eigenvalue problem
 on the coarsest mesh. Therefore,  the aim of this paper is to
construct a cascadic multigrid method to solve the eigenvalue problem
by transforming the eigenvalue problem solving to a series of smoothing
 iteration steps on the sequence of meshes and eigenvalue problem solving
  on the coarsest mesh by the multilevel correction method.
Similarly to the cascadic multigrid for the boundary value problem,
we also only do the smoothing steps for a boundary value problem by
using the previous eigenpair approximation
as the start value. As same as the cascadic multigrid method for boundary
value problems, the numbers of smoothing iteration steps need to be
increased in the coarse levels. The final eigenpair approximation has
the same order algebraic error as the discretization error of the
finite element method by organizing the suitable number of smoothing
iteration steps in different levels.

The rest of this paper is organized as follows. In the next section,
 we introduce the finite element method for the eigenvalue problem and
 the corresponding error estimates.
A cascadic multigrid method for eigenvalue problem based on the multilevel
 correction scheme is presented and analyzed in Section 3. In Section 4,
 two numerical examples are presented to validate our theoretical analysis.
  Some concluding remarks are given in the last section.

\section{Finite element method for eigenvalue problem}\label{sec;preliminary}
This section is devoted to introducing some notation and the finite element
method for the eigenvalue problem. In this paper, we shall use the standard notation
for Sobolev spaces $W^{s,p}(\Omega)$ and their
associated norms and semi-norms (\cite{Adams}). For $p=2$, we denote
$H^s(\Omega)=W^{s,2}(\Omega)$ and
$H_0^1(\Omega)=\{v\in H^1(\Omega):\ v|_{\partial\Omega}=0\}$,
where $v|_{\Omega}=0$ is in the sense of trace,
$\|\cdot\|_{s,\Omega}=\|\cdot\|_{s,2,\Omega}$.
In some places, $\|\cdot\|_{s,2,\Omega}$ should be viewed as piecewise
defined if it is necessary.
The letter $C$ (with or without subscripts) denotes a generic
positive constant which may be different at its different occurrences
 through the paper.

For simplicity, we consider the following model problem to illustrate the main idea:
Find $(\lambda, u)$ such that
\begin{equation}\label{LaplaceEigenProblem}
\left\{
\begin{array}{rcl}
-\nabla\cdot(\mathcal{A}\nabla u)&=&\lambda u, \quad {\rm in} \  \Omega,\\
u&=&0, \ \ \ \quad {\rm on}\  \partial\Omega,
\end{array}
\right.
\end{equation}
where $\mathcal{A}$ is a symmetric and positive definite matrix with suitable
regularity, $\Omega\subset\mathcal{R}^d (d=2,3)$ is a bounded domain with
Lipschitz boundary $\partial\Omega$ and $\nabla$, $\nabla\cdot$
denote the gradient, divergence operators, respectively.

In order to use the finite element method to solve
the eigenvalue problem (\ref{LaplaceEigenProblem}), we need to define
the corresponding variational form as follows:
Find $(\lambda, u )\in \mathcal{R}\times V$ such that $b(u,u)=1$ and
\begin{eqnarray}\label{weak_eigenvalue_problem}
a(u,v)&=&\lambda b(u,v),\quad \forall v\in V,
\end{eqnarray}
where $V:=H_0^1(\Omega)$ and
\begin{equation}\label{inner_product_a_b}
a(u,v)=\int_{\Omega}\nabla u\cdot\mathcal{A}\nabla v d\Omega,
 \ \ \ \  \ \ b(u,v) = \int_{\Omega}uv d\Omega.
\end{equation}
The norms $\|\cdot\|_a$ and $\|\cdot\|_b$ are defined by
\begin{eqnarray*}
\|v\|_a=a(v,v)^{1/2}\ \ \ \ \ {\rm and}\ \ \ \ \ \|v\|_b=b(v,v)^{1/2}.
\end{eqnarray*}
  It is well known that the eigenvalue problem (\ref{weak_eigenvalue_problem})
  has an eigenvalue sequence $\{\lambda_j \}$ (cf. \cite{Babuska2,Chatelin}):
$$0<\lambda_1\leq \lambda_2\leq\cdots\leq\lambda_k\leq\cdots,\ \ \
\lim_{k\rightarrow\infty}\lambda_k=\infty,$$ and associated
eigenfunctions
$$u_1, u_2, \cdots, u_k, \cdots,$$
where $b(u_i,u_j)=\delta_{ij}$ ($\delta_{ij}$ denotes the Kronecker function).
In the sequence $\{\lambda_j\}$, the $\lambda_j$ are repeated according to their
geometric multiplicity.

Now, let us define the finite element approximations of the problem
(\ref{weak_eigenvalue_problem}). First we generate a shape-regular
decomposition of the computing domain $\Omega\subset \mathcal{R}^d\
(d=2,3)$ into triangles or rectangles for $d=2$ (tetrahedrons or
hexahedrons for $d=3$). The diameter of a cell $K\in\mathcal{T}_h$
is denoted by $h_K$ and the mesh size $h$ describes  the maximum diameter of all cells
$K\in\mathcal{T}_h$.
Based on the mesh $\mathcal{T}_h$, we can construct a finite element space denoted by
 $V_h \subset V$. For simplicity, we set $V_h$ as the linear finite
 element space which is defined as follows
\begin{equation}\label{linear_fe_space}
  V_h = \big\{ v_h \in C(\Omega)\ \big|\ v_h|_{K} \in \mathcal{P}_1,
  \ \ \forall K \in \mathcal{T}_h\big\},
\end{equation}
where $\mathcal{P}_1$ denotes the linear function space.



The standard finite element scheme for eigenvalue
 problem (\ref{weak_eigenvalue_problem}) is:
Find $(\bar{\lambda}_h, \bar{u}_h)\in \mathcal{R}\times V_h$
such that $b(\bar{u}_h,\bar{u}_h)=1$ and
\begin{eqnarray}\label{Weak_Eigenvalue_Discrete}
a(\bar{u}_h,v_h)
&=&\bar{\lambda}_h b(\bar{u}_h,v_h),\quad\ \  \ \forall v_h\in V_h.
\end{eqnarray}
From \cite{Babuska1,Babuska2}, we have the following
Rayleigh quotient expression for $\bar{\lambda}_h$:
\begin{eqnarray}\label{Rayleigh_Quotient_Discrete}
\bar{\lambda}_h &=&\frac{a(\bar{u}_h,\bar{u}_h)}{b(\bar{u}_h,\bar{u}_h)},
\end{eqnarray}
and the  discrete eigenvalue problem (\ref{Weak_Eigenvalue_Discrete}) has eigenvalues:
$$0<\bar{\lambda}_{1,h}\leq \bar{\lambda}_{2,h}\leq\cdots\leq \bar{\lambda}_{k,h}
\leq\cdots\leq \bar{\lambda}_{N_h,h},$$
and corresponding eigenfunctions
$$\bar{u}_{1,h}, \bar{u}_{2,h},\cdots, \bar{u}_{k,h}, \cdots, \bar{u}_{N_h,h},$$
where $b(\bar{u}_{i,h},\bar{u}_{j,h})=\delta_{ij}, 1\leq i,j\leq N_h$ ($N_h$ is
the dimension of the finite element space $V_h$).

Let $M(\lambda_i)$ denote the eigenspace corresponding to the
eigenvalue $\lambda_i$ which is defined by
\begin{eqnarray}
M(\lambda_i)&=&\big\{w\in H_0^1(\Omega): w\ {\rm is\ an\ eigenvalue\ of\
(\ref{weak_eigenvalue_problem})\ corresponding\ to} \ \lambda_i\nonumber\\
&&\ \ \ \ \ {\rm and}\ b(w,w)=1\big\},
\end{eqnarray}
and define
\begin{eqnarray}
\delta_h(\lambda_i)=\sup_{w\in M(\lambda_i)}\inf_{v\in
V_h}\|w-v\|_{a}.
\end{eqnarray}

Let us define the following quantity:
\begin{eqnarray}
\eta_{a}(h)&=&\sup_{f\in L^2(\Omega),\|f\|_b=1}\inf_{v\in V_h}\|Tf-v\|_{a},\label{eta_a_h_Def}
\end{eqnarray}
where $T:L^2(\Omega)\rightarrow V$ is defined as
\begin{equation}\label{laplace_source_operator}
  a(Tf,v) = b(f,v), \ \ \ \ \  \forall f \in L^2(\Omega) \ \ \  {\rm and}\  \ \ \forall v\in V.
\end{equation}
Then the error estimates for the eigenpair approximations by the finite
element method can be described as follows.
\begin{lemma}(\cite[Lemma 3.7, (3.29b)]{Babuska1}, \cite[P.699]{Babuska2} and \cite{Chatelin})
\label{Err_Eigen_Global_Lem}
For any eigenpair approximation
$(\bar{\lambda}_{i,h},\bar{u}_{i,h})$ $(i = 1, 2, \cdots, N_h)$ of
(\ref{Weak_Eigenvalue_Discrete}),
 there exists an exact eigenpair  $(\lambda_i, u_i)$ of
(\ref{weak_eigenvalue_problem})
such that $b(u_i, u_i) = 1$ and
\begin{eqnarray}
\|u_i-\bar{u}_{i,h}\|_{a}
&\leq& C_i\delta_h(\lambda_i),\label{Err_Eigenfunction_Global_1_Norm} \\
\|u_i-\bar{u}_{i,h}\|_{b}
&\leq& C_i\eta_{a}(h)\|u_i - u_{i,h}\|_{a},\label{Err_Eigenfunction_Global_0_Norm}\\
|\lambda_i-\bar{\lambda}_{i,h}|
&\leq&C_i\delta_h^2(\lambda_i). \label{Estimate_Eigenvalue}
\end{eqnarray}
Here and hereafter $C_i$ is some constant depending on $i$ but independent of  the mesh size $h$.
\end{lemma}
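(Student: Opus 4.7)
My plan is to work inside the Babuska--Osborn spectral framework. Define the solution operator $T$ by \eqref{laplace_source_operator} and its Galerkin counterpart $T_h:L^2(\Omega)\to V_h$ via $a(T_hf,v_h)=b(f,v_h)$ for every $v_h\in V_h$. Since $V$ embeds compactly into $L^2(\Omega)$, $T$ is compact and self-adjoint with respect to $b(\cdot,\cdot)$, and the eigenvalues of \eqref{weak_eigenvalue_problem} and \eqref{Weak_Eigenvalue_Discrete} are reciprocals of those of $T$ and $T_h$. Writing $T_h=P_hT$ with $P_h:V\to V_h$ the Ritz ($a$-orthogonal) projection, Galerkin orthogonality supplies the best-approximation identity
\begin{equation*}
\|(T-T_h)f\|_a=\inf_{v_h\in V_h}\|Tf-v_h\|_a,
\end{equation*}
which is the engine behind all three estimates.

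For \eqref{Err_Eigenfunction_Global_1_Norm}, I would apply this identity on $f\in M(\lambda_i)$: since $Tf=f/\lambda_i$ there, one obtains $\|(T-T_h)f\|_a\leq \delta_h(\lambda_i)/\lambda_i$ uniformly on $M(\lambda_i)$. The classical gap-of-invariant-subspaces theory for compact self-adjoint operators (Chatelin, Babuska--Osborn) then produces an exact eigenfunction $u_i\in M(\lambda_i)$ with $\|u_i-\bar u_{i,h}\|_a$ controlled by this operator norm, giving \eqref{Err_Eigenfunction_Global_1_Norm}. For \eqref{Err_Eigenfunction_Global_0_Norm}, I would invoke an Aubin--Nitsche duality: let $w=T(u_i-\bar u_{i,h})$, so that $\inf_{v_h\in V_h}\|w-v_h\|_a\leq\eta_a(h)\|u_i-\bar u_{i,h}\|_b$ by the definition \eqref{eta_a_h_Def}. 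A careful Galerkin-orthogonality manipulation together with the eigenvalue equations $a(u_i,v)=\lambda_i b(u_i,v)$ and $a(\bar u_{i,h},v_h)=\bar\lambda_{i,h} b(\bar u_{i,h},v_h)$ then produces
\begin{equation*}
\|u_i-\bar u_{i,h}\|_b^2 \leq C_i\,\eta_a(h)\,\|u_i-\bar u_{i,h}\|_b\,\|u_i-\bar u_{i,h}\|_a,
\end{equation*}
from which \eqref{Err_Eigenfunction_Global_0_Norm} follows by dividing through. The main obstacle here is the bookkeeping: unlike the source problem, $\bar u_{i,h}$ is not the Ritz projection of $u_i$, so auxiliary terms involving $\lambda_i-\bar\lambda_{i,h}$ and the mismatch between $b(u_i,\cdot)$ and $b(\bar u_{i,h},\cdot)$ must be absorbed by a bootstrap using \eqref{Err_Eigenfunction_Global_1_Norm}.

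The eigenvalue estimate \eqref{Estimate_Eigenvalue} then follows from the Rayleigh quotient identity: for any $v\in V$ with $b(v,v)=1$,
\begin{equation*}
\frac{a(v,v)}{b(v,v)}-\lambda_i=\|v-u_i\|_a^2-\lambda_i\|v-u_i\|_b^2.
\end{equation*}
Taking $v=\bar u_{i,h}$ and using \eqref{Rayleigh_Quotient_Discrete} yields
\begin{equation*}
|\bar\lambda_{i,h}-\lambda_i|\leq \|u_i-\bar u_{i,h}\|_a^2+\lambda_i\|u_i-\bar u_{i,h}\|_b^2.
\end{equation*}
Since $\eta_a(h)$ is bounded for sufficiently small $h$, both \eqref{Err_Eigenfunction_Global_1_Norm} and \eqref{Err_Eigenfunction_Global_0_Norm} imply the right-hand side is $O(\delta_h^2(\lambda_i))$, which is exactly \eqref{Estimate_Eigenvalue}.
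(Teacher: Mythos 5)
The paper does not actually prove this lemma: it is quoted verbatim from the cited references (Babu\v{s}ka--Osborn and Chatelin), so the ``paper's proof'' is the citation itself. Your sketch is a faithful reconstruction of exactly the argument those references use --- the operator framework $T_h=P_hT$, the best-approximation identity for $\|(T-T_h)f\|_a$ on $M(\lambda_i)$ feeding the gap theorem for \eqref{Err_Eigenfunction_Global_1_Norm}, Aubin--Nitsche duality with $\eta_a(h)$ for \eqref{Err_Eigenfunction_Global_0_Norm}, and the Rayleigh-quotient expansion of Lemma \ref{Rayleigh_quotient_expansion_lem} for \eqref{Estimate_Eigenvalue} --- so it is correct and takes the same approach the paper relies on.
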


The following Rayleigh quotient expansion of the eigenvalue error is the tool
to obtain the error estimates of the eigenvalue approximations.
\begin{lemma}(\cite{Babuska1})\label{Rayleigh_quotient_expansion_lem}
Assume $(\lambda,u)$ is an eigenpair of the eigenvalue problem
(\ref{weak_eigenvalue_problem}). Then for any $w \in H_0^1(\Omega)\backslash\{0\}$,
the following expansion holds:
\begin{equation}\label{Rayleigh_quotient_expansion}
\frac{a(w,w)}{b(w,w)} - \lambda = \frac{a(w-u,w-u)}{b(w,w)} - \lambda\frac{(w-u,w-u)}{b(w,w)}.
\end{equation}
\end{lemma}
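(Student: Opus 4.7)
The identity is purely algebraic, so the plan is to expand both $a(w-u,w-u)$ and $b(w-u,w-u)$ by bilinearity and then exploit the variational eigenvalue equation to collapse the cross terms. The symmetry of $a$ and $b$ makes the expansion
$$a(w-u,w-u) = a(w,w) - 2a(w,u) + a(u,u), \quad b(w-u,w-u) = b(w,w) - 2b(w,u) + b(u,u)$$
immediate.

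Next, I would exploit the fact that $u$ is an eigenfunction. Testing the weak form (\ref{weak_eigenvalue_problem}) against $v = w \in V$ gives $a(w,u) = a(u,w) = \lambda b(u,w) = \lambda b(w,u)$, and against $v = u$, together with the normalization $b(u,u)=1$ built into the hypothesis $u \in M(\lambda)$, gives $a(u,u) = \lambda$. Plugging these two identities into the difference $a(w-u,w-u) - \lambda b(w-u,w-u)$, the cross terms $-2a(w,u) + 2\lambda b(w,u)$ cancel, and the constant terms $a(u,u) - \lambda b(u,u) = \lambda - \lambda$ also cancel. What remains is precisely $a(w,w) - \lambda b(w,w)$.

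Finally, dividing by $b(w,w)$ yields the claimed expansion. This last step is legitimate because the hypothesis $w \in H_0^1(\Omega)\setminus\{0\}$ ensures $\|w\|_b > 0$, which is the only point of the statement that actually uses $w \neq 0$.

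There is essentially no obstacle here; the whole argument is a short symmetric-bilinear-form manipulation driven by a single substitution from the eigenvalue equation. If anything, the only thing worth stating carefully is the role of the normalization $b(u,u)=1$, which is what allows $a(u,u)$ to equal $\lambda$ cleanly rather than $\lambda b(u,u)$, and thereby makes the cancellation tidy.
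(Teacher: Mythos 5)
Your proof is correct and is exactly the standard bilinear expansion that the paper itself omits, deferring to Babu\v{s}ka--Osborn. One small remark: the normalization $b(u,u)=1$ is not actually needed for the cancellation, since the eigenvalue equation already gives $a(u,u)-\lambda b(u,u)=0$ regardless of how $u$ is scaled.
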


\section{Cascadic multilevel correction scheme for eigenvalue problem}
Recently, a multilevel correction scheme is introduced
in \cite{LinXie_MultiLevel,Xie_IMA} for
solving eigenvalue problems. Here, we propose a type of
cascadic multigrid method for
eigenvalue problems. The main idea in this method is to approximate
 the underlying boundary value
problems on each level by some simple smoothing iteration steps.
In order to describe the cascadic multigrid method, we first introduce
 the sequence of finite element
spaces and the smoothing properties of appropriate smoothers.

In order to do multigrid scheme, we first generate a coarse mesh $\mathcal{T}_H$
with the mesh size $H$ and the coarse linear finite element space $V_H$ is
defined on the mesh $\mathcal{T}_H$. Then we define a sequence of
 triangulations $\mathcal{T}_{h_k}$
of $\Omega\subset \mathcal{R}^d$ determined as follows.
Suppose $\mathcal{T}_{h_1}$ (produced from $\mathcal{T}_H$ by
regular refinements) is given and let $\mathcal{T}_{h_k}$ be obtained
from $\mathcal{T}_{h_{k-1}}$ via regular refinement
(produce $\beta^d$ subelements) such that
\begin{eqnarray}\label{mesh_size_recur}
h_k\approx\frac{1}{\beta}h_{k-1},
\end{eqnarray}
where the positive number $\beta$ denotes the refinement index
 and larger than $1$ (always equals $2$).
Based on this sequence of meshes, we construct the corresponding
 nested linear finite element spaces such that
\begin{eqnarray}\label{FEM_Space_Series}
V_{H}\subseteq V_{h_1}\subset V_{h_2}\subset\cdots\subset V_{h_n}.
\end{eqnarray}
The sequence of finite element spaces
$V_{h_1}\subset V_{h_2}\subset\cdots\subset V_{h_n}$
 and the finite element space $V_H$ have  the following relations
of approximation accuracy
\begin{eqnarray}\label{delta_recur_relation}
\eta_a(H)\gtrsim\delta_{h_1}(\lambda_i),\ \ \ \
\delta_{h_k}(\lambda_i)\approx\frac{1}{\beta}\delta_{h_{k-1}}(\lambda_i),\ \ \ k=2,\cdots,n.
\end{eqnarray}
\begin{remark}
The relation (\ref{delta_recur_relation}) is reasonable since we can choose
$\delta_{h_k}(\lambda_i)=h_k\ (k=1,\cdots,n)$. Always the upper bound of
the estimate $\delta_{h_k}(\lambda_i)\lesssim h_k$ holds. Recently, we also obtain the
lower bound result $\delta_{h_k}(\lambda_i)\gtrsim h_k$ (c.f. \cite{LinXieXu}).
\end{remark}

For generality, we introduce a smoothing operator $S_h: V_h\rightarrow V_h$
which satisfies the following estimate
\begin{equation}\label{smoothing_property}
\left\{
\begin{array}{rcl}
\|S_h^m w_h\|_a &\le& \frac{C}{m^{\alpha}}\frac{1}{h}\|w_h\|_b, \\
\|S_h^m w_h\|_a &\le& \|w_h\|_a, \\
\|S_h^m (w_h + v_h)\|_a &\le&\|S_h^m w_h\|_a + \|S_h^m v_h\|_a,
\end{array}
\right.
\end{equation}
where $C$ is a constant independent of $h$ and $\alpha$
 is some positive number depending on the choice of smoother.
It is proved in \cite{Hackbusch1985,shaidurov1995multigrid,wang2004basic}
that the symmetric Gauss-Seidel,
the SSOR, the damped Jacobi and the Richardson iteration are smoothers in the sense
 of (\ref{smoothing_property}) with parameter $\alpha=1/2$ and the
 conjugate-gradient iteration is the
 smoother with $\alpha=1$ (cf. \cite{Shaidurov1996CCG,ShaidurovTobiska2000}).

Then we define the following notation
\begin{equation}\label{smooth_process_Vh}
w_h = {\it Smooth}(V_h, f, \xi_h, m, S_h)
\end{equation}
as the smoothing process for the following boundary value problem
\begin{equation}\label{fe_source}
a(u_h,v_h) = b(f,v_h), \ \ \ \ \forall v_h \in V_h,
\end{equation}
where $\xi_h$ denote the initial value of the smoothing process, $S_h$ denote the chosen
smoothing operator, $m$ the number of the iteration steps and $w_h$ is
the output of the smoothing process.

Now, we come to introduce the cascadic multigrid method for the eigenvalue problem
(\ref{weak_eigenvalue_problem}). For simplicity, we assume the desired eigenvalue is
simple and the computing domain is convex. Then we have the following estimates
\begin{eqnarray}\label{Estimates_Eta_Delta_Hh}
\eta_a(H)\approx H,\ \ \ \ \eta_a(h_k)\approx h_k
\ \ \ \  {\rm and}\ \ \ \ \delta_{h_k}(\lambda_i)\approx h_k,\ \ \ \ k=1,\cdots,n.
\end{eqnarray}

Assume we have obtained an eigenpair approximations
 $(\lambda^{h_k}, u^{h_k}) \in \mathcal{R}\times V_{h_k}$.
 Now we introduce a cascadic type one correction step to improve the accuracy of the
current eigenpair approximation  $(\lambda^{h_k}, u^{h_k}) \in \mathcal{R}\times V_{h_k}$.
\begin{algorithm}\label{Smooth_Correction_Alg}
Cascadic type of One Correction Step
\begin{enumerate}
\item Define the following auxiliary source problem:
Find $\widehat{u}^{h_{k+1}}\in V_{h_{k+1}}$ such that
\begin{equation}\label{correct_source_exact_cas}
a(\widehat{u}^{h_{k+1}},v_{h_{k+1}}) = \lambda^{h_{k}}b(u^{h_{k}},v_{h_{k+1}}),
\ \ \ \ \ \forall v_{h_{k+1}}\in V_{h_{k+1}}.
\end{equation}
Perform the smoothing process (\ref{smooth_process_Vh}) to obtain a new eigenfuction
 approximation $\widetilde{u}^{h_{k+1}}\in V_{h_{k+1}}$ by
\begin{equation}\label{smooth_correct_process}
\widetilde{u}^{h_{k+1}} = {\it Smooth}(V_{h_{k+1}}, \lambda^{h_{k}}u^{h_{k}},
 u^{h_{k}}, m_{k+1}, S_{h_{k+1}}).
\end{equation}

\item Define a new finite element space $V_{H}^{h_{k+1}} = V_H +
{\rm span}\{\widetilde{u}^{h_{k+1}}\}$ and solve the following eigenvalue problem:
Find $(\lambda^{h_{k+1}},u^{h_{k+1}})\in \mathcal{R}\times V_{H}^{h_{k+1}}$
such that $b(u^{h_{k+1}},u^{h_{k+1}})=1$ and
\begin{equation}\label{cascadic_correct_eig_exact}
a(u^{h_{k+1}},v_{H}^{h_{k+1}}) = \lambda^{h_{k+1}}b(u^{h_{k+1}},v_{H}^{h_{k+1}}),
\ \ \ \ \ \forall v_{H}^{h_{k+1}}\in V_{H}^{h_{k+1}}.
\end{equation}
\end{enumerate}
Summarize the above two steps by defining
\begin{eqnarray*}
(\lambda^{h_{k+1}},u^{h_{k+1}}) =
{\it SmoothCorrection}(V_H,V_{h_{k+1}},\lambda^{h_{k}},u^{h_{k}},m_{k+1},S_{h_{k+1}}).
 \end{eqnarray*}
\end{algorithm}

Based on the above algorithm, i.e., the cascadic type of one correction step, we can
construct a cascadic multigrid method as follows:

\begin{algorithm}\label{Cascadic_MCS_Alg}
Eigenvalue Cascadic Multigrid Method
\begin{enumerate}
\item Find $(\lambda^{h_1}, u^{h_1})\in \mathcal{R}\times V_{h_1}$ such that
\begin{equation*}
a(u^{h_1}, v_{h_1}) = \lambda^{h_1} b(u^{h_1}, v_{h_1}), \quad \forall v_{h_1}\in  V_{h_1}.
\end{equation*}
\item For $k=1,\cdots,n-1$, do the following iteration
\begin{eqnarray*}
(\lambda^{h_{k+1}},u^{h_{k+1}}) =
{\it SmoothCorrection}(V_H,V_{h_{k+1}},\lambda^{h_{k}},u^{h_{k}},m_{k+1},S_{h_{k+1}}).
\end{eqnarray*}
\end{enumerate}
 Finally, we obtain an eigenpair approximation
$(\lambda^{h_n},u^{h_n}) \in \mathcal{R}\times V_{h_n}$.
\end{algorithm}

\vspace{1ex} 
In order to analyze the convergence of Algorithm \ref{Cascadic_MCS_Alg},
 we introduce an auxiliary algorithm and then show its superapproximate property.

Similarly, assume we have obtained an eigenpair approximations
 $(\widetilde{\lambda}_{h_k}, \widetilde{u}_{h_k}) \in \mathcal{R}\times V_{h_k}$.
 We introduce the following auxiliary one correction step.
\begin{algorithm}\label{Auxiliary_Correction_Alg}
Auxiliary One Correction Step
\begin{enumerate}
\item Define the following auxiliary source problem:
Find $\widehat{u}_{h_{k+1}}\in V_{h_{k+1}}$ such that
\begin{equation}\label{Auxiliary_correct_source_exact}
a(\widehat{u}_{h_{k+1}},v_{h_{k+1}}) =
\widetilde{\lambda}_{h_k}b(\widetilde{u}_{h_k},v_{h_{k+1}}),
\ \ \ \ \ \forall v_{h_{k+1}}\in V_{h_{k+1}}.
\end{equation}

\item Define a new finite element space
$\widetilde{V}_{H,h_{k+1}} = V_H +
 {\rm span}\{\widehat{u}_{h_{k+1}}\} + {\rm span}\{\widetilde{u}^{h_{k+1}}\}$
 and solve the following eigenvalue problem:
Find $(\widetilde{\lambda}_{h_{k+1}},\widetilde{u}_{h_{k+1}})\in
 \mathcal{R}\times \widetilde{V}_{H,h_{k+1}}$ such that
  $b(\widetilde{u}_{h_{k+1}},\widetilde{u}_{h_{k+1}})=1$ and
\begin{equation}\label{Auxiliary_correct_eig_exact}
a(\widetilde{u}_{h_{k+1}},\widetilde{v}_{H,h_{k+1}}) = \widetilde{\lambda}_{h_{k+1}}b(\widetilde{u}_{h_{k+1}},\widetilde{v}_{H,h_{k+1}}),
\ \ \ \ \ \forall \widetilde{v}_{H,h_{k+1}}\in \widetilde{V}_{H,h_{k+1}}.
\end{equation}
\end{enumerate}
Summarize the above two steps by defining
\begin{eqnarray*}
(\widetilde{\lambda}_{h_{k+1}},\widetilde{u}_{h_{k+1}}) =
{\it AuxiliaryCorrection}(V_H,V_{h_{k+1}},\widetilde{\lambda}_{h_{k}},\widetilde{u}_{h_{k}},
\widetilde{u}^{h_{k+1}}).
 \end{eqnarray*}
\end{algorithm}
\begin{algorithm}\label{Auxiliary_MCS_Alg}
Eigenvalue Auxiliary Multilevel Correction Method
\begin{enumerate}
\item Find $(\widetilde{\lambda}_{h_1}, \widetilde{u}_{h_1})\in \mathcal{R}\times V_{h_1}$ such that
\begin{equation*}
a(\widetilde{u}_{h_1}, v_{h_1}) = \widetilde{\lambda}_{h_1} b(\widetilde{u}_{h_1}, v_{h_1}),
\quad \forall v_{h_1}\in  V_{h_1}.
\end{equation*}
\item For $k=1,\cdots,n-1$, do the following iteration
\begin{eqnarray*}
(\widetilde{\lambda}_{h_{k+1}},\widetilde{u}_{h_{k+1}}) =
{\it AuxiliaryCorrection}(V_H,V_{h_{k+1}},\widetilde{\lambda}_{h_{k}},\widetilde{u}_{h_{k}},
\widetilde{u}^{h_{k+1}}).
\end{eqnarray*}
\end{enumerate}
Finally, we obtain an eigenpair approximation
$(\widetilde{\lambda}_{h_{n}},\widetilde{u}_{h_{n}}) \in \mathcal{R}\times V_{h_{n}}$.
\end{algorithm}

\vspace{1ex}
Before analyzing the convergence of
 Algorithm \ref{Cascadic_MCS_Alg}, we show a superapproximate
property of $\widetilde{u}_{h_k}$ obtained by
Algorithm \ref{Auxiliary_MCS_Alg}.
\begin{theorem}\label{super_approx_thm}
Assume $\widetilde{u}_{h_k}$ ($k=1,\cdots,n$) are
obtained by Algorithm \ref{Auxiliary_MCS_Alg}
and $\bar{u}_{h_k}$ ($k=1,\cdots,n$) the standard finite
element solution in $V_{h_k}$.  If the sequence of
finite element spaces $V_{h_1}, \cdots,
V_{h_{n}}$ and the coarse finite element space $V_H$
satisfy the following condition
\begin{equation}\label{fe_sequence_condition}
C\eta_a(H)\beta^2 < 1,
\end{equation}
the following estimate holds
\begin{equation}\label{super_approx_eigenfunction}
\|\bar{u}_{h_k} - \widetilde{u}_{h_k}\|_a
\le C\eta_a(h_k)\delta_{h_k}(\lambda),\ \ \ \ \ \ \ k=1,\cdots,n,
\end{equation}
and
\begin{equation}\label{super_approx_eigenfunction_b_norm}
  \|\bar{u}_{h_k} - \widetilde{u}_{h_k}\|_b
  \le C\eta_a(H)\eta_a(h_k)\delta_{h_k}(\lambda),\ \ \ \ \ \ \ k=1,\cdots,n,
\end{equation}
where $C$ is a constant only depending on the eigenvalue $\lambda$.
The eigenvalue approximations $\widetilde{\lambda}_{h_k}$
and $\bar{\lambda}_{h_k}$ have the following estimates
\begin{equation}\label{super_approx_eigenvalue}
\big| \bar{\lambda}_{h_k} - \widetilde{\lambda}_{h_k} \big|
\le \|\bar{u}_{h_k} - \widetilde{u}_{h_k}\|_a^2,\ \ \ \ \ \ \ k=1,2,\cdots,n.
\end{equation}
\end{theorem}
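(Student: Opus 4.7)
The plan is to establish all three estimates by induction on $k$. The base case $k=1$ is immediate: Step 1 of Algorithm \ref{Auxiliary_MCS_Alg} defines $(\widetilde{\lambda}_{h_1}, \widetilde{u}_{h_1})$ as the exact finite element eigenpair on $V_{h_1}$, so $\widetilde{u}_{h_1}=\bar{u}_{h_1}$, $\widetilde{\lambda}_{h_1}=\bar{\lambda}_{h_1}$, and every estimate holds trivially with error zero. Assume now that the three bounds hold at level $k$; the task is to promote them to level $k+1$.

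For the inductive step the key identity is $\widehat{u}_{h_{k+1}} - \bar{u}_{h_{k+1}} = T_{h_{k+1}}\bigl(\widetilde{\lambda}_{h_k}\widetilde{u}_{h_k} - \bar{\lambda}_{h_{k+1}}\bar{u}_{h_{k+1}}\bigr)$, where $T_{h_{k+1}}\colon L^2(\Omega)\to V_{h_{k+1}}$ denotes the Galerkin discretization of the source operator $T$ from (\ref{laplace_source_operator}). Applying the uniform boundedness $\|T_{h_{k+1}}f\|_a\le C\|f\|_b$ and splitting the data difference by inserting the exact eigenpair $(\lambda,u)$, I control each piece via Lemma \ref{Err_Eigen_Global_Lem} and the inductive $b$-norm bound on $\|\bar{u}_{h_k}-\widetilde{u}_{h_k}\|_b$, together with the eigenvalue gap $|\widetilde{\lambda}_{h_k}-\bar{\lambda}_{h_{k+1}}|$. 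This yields an estimate of the form
\begin{equation*}
\|\bar{u}_{h_{k+1}} - \widehat{u}_{h_{k+1}}\|_a \le C_1\,\|\bar{u}_{h_k}-\widetilde{u}_{h_k}\|_b + C_2\,\eta_a(h_k)\delta_{h_k}(\lambda).
\end{equation*}
Since $\widehat{u}_{h_{k+1}}\in \widetilde{V}_{H,h_{k+1}}\subset V_{h_{k+1}}$ and $\widetilde{u}_{h_{k+1}}$ is the Galerkin eigenfunction on the subspace $\widetilde{V}_{H,h_{k+1}}$, the standard Babu\v{s}ka--Osborn quasi-optimality supplies $\|\bar{u}_{h_{k+1}}-\widetilde{u}_{h_{k+1}}\|_a\le C\|\bar{u}_{h_{k+1}}-\widehat{u}_{h_{k+1}}\|_a$. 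Because $V_H\subseteq \widetilde{V}_{H,h_{k+1}}$, the best $a$-approximation of $Tf$ over $\widetilde{V}_{H,h_{k+1}}$ is bounded by $\eta_a(H)$, so the Aubin--Nitsche refinement gives the companion duality estimate $\|\bar{u}_{h_{k+1}}-\widetilde{u}_{h_{k+1}}\|_b\le C\eta_a(H)\|\bar{u}_{h_{k+1}}-\widetilde{u}_{h_{k+1}}\|_a$.

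Using the mesh relation $h_{k+1}\approx h_k/\beta$ to trade $\eta_a(h_k)\delta_{h_k}(\lambda)$ for $\beta^{2}\eta_a(h_{k+1})\delta_{h_{k+1}}(\lambda)$, the two previous inequalities close into a coupled linear recursion for the inductive constants in (\ref{super_approx_eigenfunction}) and (\ref{super_approx_eigenfunction_b_norm}) whose amplification factor scales like $\eta_a(H)\beta^{2}$. The hypothesis $C\eta_a(H)\beta^{2}<1$ is exactly what makes this recursion contractive, producing constants independent of $k$ and $n$ and thus delivering (\ref{super_approx_eigenfunction}) and (\ref{super_approx_eigenfunction_b_norm}) at level $k+1$. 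For the eigenvalue estimate (\ref{super_approx_eigenvalue}), I would apply Lemma \ref{Rayleigh_quotient_expansion_lem} inside the discrete problem on $V_{h_k}$ with reference pair $(\bar{\lambda}_{h_k},\bar{u}_{h_k})$ and $w=\widetilde{u}_{h_k}$ (using $b(\widetilde{u}_{h_k},\widetilde{u}_{h_k})=1$); since $\widetilde{V}_{H,h_k}\subseteq V_{h_k}$, the Rayleigh--Ritz principle forces $\widetilde{\lambda}_{h_k}\ge \bar{\lambda}_{h_k}$, so the non-positive term $-\bar{\lambda}_{h_k}\|\widetilde{u}_{h_k}-\bar{u}_{h_k}\|_b^{2}$ in the expansion can simply be discarded to obtain the stated bound. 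The main obstacle is the bookkeeping of this coupled recursion: one has to verify that when the inductive $b$-norm bound is fed back into the $a$-norm estimate, the amplification constant is truly $C\eta_a(H)\beta^{2}$ rather than $C\beta^{2}$ alone, which is precisely why the $b$-norm hypothesis---with its built-in factor of $\eta_a(H)$---is indispensable for closing the induction.
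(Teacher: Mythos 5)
Your proposal is correct and follows essentially the same route as the paper: bound $\|\bar{u}_{h_{k+1}}-\widehat{u}_{h_{k+1}}\|_a$ by the $b$-norm of the data difference $\widetilde{\lambda}_{h_k}\widetilde{u}_{h_k}-\bar{\lambda}_{h_{k+1}}\bar{u}_{h_{k+1}}$, invoke quasi-optimality on $\widetilde{V}_{H,h_{k+1}}$, feed the $\eta_a(H)$-weighted $b$-norm bound back in, and close the resulting recursion using $C\eta_a(H)\beta^2<1$ (the paper unrolls this as a geometric sum rather than an induction with a fixed constant, which is the same mechanism). Your sign argument via Rayleigh--Ritz monotonicity for (\ref{super_approx_eigenvalue}) is a slightly more explicit justification than the paper's direct appeal to Lemma \ref{Rayleigh_quotient_expansion_lem}, but it is not a different approach.
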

\begin{proof}
  Define $\epsilon_{h_k}:=|\widetilde{\lambda}_{h_k}-\bar{\lambda}_{h_k}|+
  \|\widetilde{u}_{h_k}-\bar{u}_{h_k}\|_b,\ k=1,2,\cdots,n$. And it is obvious that
  $\epsilon_{h_1}=0$.
  From (\ref{Weak_Eigenvalue_Discrete}) and (\ref{Auxiliary_correct_source_exact}), we have
\begin{eqnarray}\label{Error_1}
&& \|\bar{u}_{h_{k+1}} - \widehat{u}_{h_{k+1}}\|_a^2 = a(\bar{u}_{h_{k+1}}
 - \widehat{u}_{h_{k+1}},\bar{u}_{h_{k+1}} - \widehat{u}_{h_{k+1}})\nonumber\\
&=& \bar{\lambda}_{h_{k+1}}b(\bar{u}_{h_{k+1}},\bar{u}_{h_{k+1}}
- \widehat{u}_{h_{k+1}}) -
\widetilde{\lambda}_{h_k}b(\widetilde{u}_{h_k},\bar{u}_{h_{k+1}}
 - \widehat{u}_{h_{k+1}}) \nonumber\\
&=& b(\bar{\lambda}_{h_{k+1}}\bar{u}_{h_{k+1}}
-\widetilde{\lambda}_{h_k}\widetilde{u}_{h_k},
\bar{u}_{h_{k+1}} - \widehat{u}_{h_{k+1}}) \nonumber\\
&\le& \|\bar{\lambda}_{h_{k+1}}\bar{u}_{h_{k+1}}-\widetilde{\lambda}_{h_k}
\widetilde{u}_{h_k}\|_{b} \|\bar{u}_{h_{k+1}} - \widehat{u}_{h_{k+1}}\|_a.
\end{eqnarray}
Note that the eigenvalue problem (\ref{Auxiliary_correct_eig_exact})
 can be regarded
 as a finite dimensional approximation of the eigenvalue problem
  (\ref{Weak_Eigenvalue_Discrete}).
Similarly to Lemma \ref{Err_Eigen_Global_Lem}
(see \cite{Babuska1,LinXie_MultiLevel}),
from the second step in
Algorithm \ref{Auxiliary_Correction_Alg}, the following estimate holds
\begin{equation}\label{Error_2}
\|\bar{u}_{h_{k+1}} - \widetilde{u}_{h_{k+1}} \|_a
\le C\inf_{\widetilde{v}_{H,h_{k+1}}\in\widetilde{V}_{H,h_{k+1}}}
\|\bar{u}_{h_{k+1}}-\widetilde{u}_{H,h_{k+1}}\|_a
\le C \| \bar{u}_{h_{k+1}} - \widehat{u}_{h_{k+1}} \|_a.
\end{equation}
Then combining (\ref{Error_1}) and (\ref{Error_2}) leads to
\begin{eqnarray}\label{super_approx_recur_break_1}
&& \| \bar{u}_{h_{k+1}} - \widetilde{u}_{h_{k+1}} \|_a
\le C \|\bar{\lambda}_{h_{k+1}}\bar{u}_{h_{k+1}}
-\widetilde{\lambda}_{h_k}\widetilde{u}_{h_k}\|_{b}\nonumber\\
&\le& C\big( |\bar{\lambda}_{h_{k+1}} - \widetilde{\lambda}_{h_k}|
+ \|\bar{u}_{h_{k+1}} - \widetilde{u}_{h_{k+1}}\|_{b} \big) \nonumber\\
&\le& C\big( |\bar{\lambda}_{h_{k+1}} - \bar{\lambda}_{h_k}|
+ |\bar{\lambda}_{h_{k}} - \widetilde{\lambda}_{h_k}|
+ \|\bar{u}_{h_{k+1}} - \bar{u}_{h_{k}}\|_{b} +
 \|\bar{u}_{h_{k}} - \widetilde{u}_{h_{k}}\|_{b}\big) \nonumber\\
&\le& C\big( |\bar{\lambda}_{h_{k+1}} - \bar{\lambda}_{h_k}|
+ \|\bar{u}_{h_{k+1}} - \bar{u}_{h_{k}}\|_{b} + \epsilon_{h_k}\big).
\end{eqnarray}
From the properties of $V_{h_k}\subset V_{h_{k+1}}$, $V_{H,h_k}\subset V_{h_k}$,
Lemma \ref{Err_Eigen_Global_Lem} and (\ref{delta_recur_relation}), we have
\begin{eqnarray*}
&&\|\bar{u}_{h_{k+1}} - \bar{u}_{h_{k}}\|_a\le
C\delta_{h_{k}}(\lambda), \ \ \ \ \
\|\bar{u}_{h_{k+1}} - \bar{u}_{h_{k}}\|_b \le
 C\eta_a(h_{k})\|\bar{u}_{h_{k+1}} - \bar{u}_{h_{k}}\|_a,\nonumber\\
&&\big| \bar{\lambda}_{h_{k+}} - \bar{\lambda}_{h_{k}} \big|
\leq C\|\bar{u}_{h_{k+1}} - \bar{u}_{h_{k}}\|_a^2
\leq C\delta_{h_{k}}(\lambda)^2\leq C\eta_a(h_k)\delta_{h_k}(\lambda) \\
&&\| \bar{u}_{h_k} - \widetilde{u}_{h_k} \|_b
\le C\eta_a(H)\| \bar{u}_{h_k} - \widetilde{u}_{h_k} \|_{a}, \ \ \ \ \
\big| \bar{\lambda}_{h_{k}} - \widetilde{\lambda}_{h_{k}} \big|
\le C\| \bar{u}_{h_k} - \widetilde{u}_{h_k} \|_{a}^2.
\end{eqnarray*}
Substituting above inequalities into (\ref{super_approx_recur_break_1}) leads to the following estimates
\begin{eqnarray}\label{super_approx_recur_break_2}
\| \bar{u}_{h_{k+1}} - \widetilde{u}_{h_{k+1}} \|_a &\le& C\big( \delta^2_{h_k}(\lambda)
+ \eta_a(h_k)\delta_{h_k}(\lambda) + \epsilon_{h_k} \big) \nonumber\\
&\le& C\big(\eta_a(h_k)\delta_{h_k}(\lambda) +
 \eta_a(H)\|\bar{u}_{h_k} - \widetilde{u}_{h_k}\|_{a} \big).
\end{eqnarray}
When $k=1$, since $\widetilde{u}_{h_1}:=\bar{u}_{h_1}$
 and $\widetilde{\lambda}_{h_1}:=\bar{\lambda}_{h_1}$, we have
\begin{eqnarray}\label{Error_k_1}
\| \bar{u}_{h_{2}} - \widetilde{u}_{h_{2}} \|_a \leq C\eta_a(h_1)\delta_{h_1}(\lambda).
\end{eqnarray}
Based on (\ref{delta_recur_relation}), (\ref{super_approx_recur_break_2}), (\ref{Error_k_1})
and recursive argument, we have the following estimates:
\begin{eqnarray}\label{super_approx_recur_break_3}
\|\bar{u}_{h_{k}} - \widetilde{u}_{h_{k}}\|_a &\le& C\sum_{j=2}^k C^{k-j}\eta_a^{k-j}(H)
\eta_a(h_{j-1})\delta_{h_{j-1}}(\lambda)  \nonumber \\
&\le& C\sum_{j=2}^k C^{k-j}\eta_a^{k-j}(H)\beta^{k-j+1}
\eta_a(h_{k})\beta^{k-j+1}\delta_{h_k}(\lambda) \nonumber\\
&\le& C{\beta}^2\Big(\sum_{j=2}^k \big(C\eta_a(H)\beta^2\big)^{k-j}
\Big)\eta_a(h_{k})\delta_{h_k}(\lambda)\nonumber\\
&\le& \frac{C\beta^2}{1-C\beta^2\eta_a(H)}\eta_a(h_{k})\delta_{h_k}(\lambda).
\end{eqnarray}
Therefore,  the desired result (\ref{super_approx_eigenfunction}) holds under
the condition $C\eta_a(H)\beta^2<1$. Furthermore, (\ref{super_approx_eigenfunction_b_norm})
and (\ref{super_approx_eigenvalue}) can be obtained directly from Lemmas \ref{Err_Eigen_Global_Lem}
and \ref{Rayleigh_quotient_expansion_lem}, respectively.
\end{proof}


\vspace{1ex}
Note that $V_H^{h_k}\subset\widetilde{V}_{H,h_k}$, then we can obtain
the following estimates which play an important role in our analysis.

\begin{lemma}\cite[Lemma 3.5]{Babuska1}\label{E_h_less_P_h_Lem}
Let $u^{h_k}$, $V_H^{h_k}$ and $\widetilde{u}_{h_k}$, $\widetilde{V}_{H,h_k}$ be
defined in Algorithms \ref{Smooth_Correction_Alg} and \ref{Auxiliary_Correction_Alg}.
Then the following estimates hold:
\begin{eqnarray}
\|u^{h_k} - \widetilde{u}_{h_k}\|_a &\le&
 C\|\widehat{u}_{h_k}-\widetilde{u}^{h_k}\|_a, \label{E_h_less_P_h_a}\\
\|u^{h_k} - \widetilde{u}_{h_k}\|_b &\le&
C\eta_a(H)\|u^{h_k} - \widetilde{u}_{h_k}\|_a, \label{E_h_less_P_h_b}\\
|\lambda^{h_k} - \widetilde{\lambda}_{h_k}| &\le&
 \|u^{h_k} - \widetilde{u}_{h_k}\|_a^2. \label{E_h_less_P_h_eigenvalue}
\end{eqnarray}
\end{lemma}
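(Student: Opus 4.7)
The plan is to exploit the inclusion $V_H^{h_k}\subset \widetilde{V}_{H,h_k}$ highlighted just before the lemma: the cascadic eigenproblem (\ref{cascadic_correct_eig_exact}) is a conforming Galerkin discretization of the auxiliary eigenproblem (\ref{Auxiliary_correct_eig_exact}) on the smaller subspace $V_H^{h_k}$. Viewed this way, the three bounds are exactly the standard Babu\v{s}ka--Osborn estimates of Lemma \ref{Err_Eigen_Global_Lem} applied with the ``exact'' space replaced by $\widetilde{V}_{H,h_k}$ and the ``discrete'' space replaced by $V_H^{h_k}$, so no fundamentally new machinery is needed.

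For (\ref{E_h_less_P_h_a}) I would first invoke the standard a priori bound
\begin{eqnarray*}
\|u^{h_k}-\widetilde{u}_{h_k}\|_a \le C\inf_{v\in V_H^{h_k}}\|\widetilde{u}_{h_k}-v\|_a,
\end{eqnarray*}
and then produce a concrete competitor on the right. Since $\widetilde{u}_{h_k}\in \widetilde{V}_{H,h_k}=V_H+\mathrm{span}\{\widehat{u}_{h_k}\}+\mathrm{span}\{\widetilde{u}^{h_k}\}$, I would decompose $\widetilde{u}_{h_k}=v_0+\alpha\widehat{u}_{h_k}+\beta\widetilde{u}^{h_k}$ with $v_0\in V_H$, and choose the candidate $v=v_0+(\alpha+\beta)\widetilde{u}^{h_k}\in V_H^{h_k}$. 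Then $\widetilde{u}_{h_k}-v=\alpha(\widehat{u}_{h_k}-\widetilde{u}^{h_k})$, and since $\widetilde{u}_{h_k}$ is $b$-normalized and close to the target eigenfunction, the scalar $|\alpha|$ is uniformly bounded in terms of $\lambda$; taking norms yields (\ref{E_h_less_P_h_a}).

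For (\ref{E_h_less_P_h_b}) I would run the Aubin--Nitsche duality argument inside the pair $(V_H^{h_k},\widetilde{V}_{H,h_k})$: because $V_H\subset V_H^{h_k}$, the dual source problem associated with $u^{h_k}-\widetilde{u}_{h_k}$ can be approximated by elements of $V_H$ with $a$-accuracy $\eta_a(H)$, giving the factor $\eta_a(H)$ in the desired inequality. For (\ref{E_h_less_P_h_eigenvalue}) I would apply the Rayleigh quotient identity of Lemma \ref{Rayleigh_quotient_expansion_lem} with $w=u^{h_k}$ and $(\lambda,u)=(\widetilde{\lambda}_{h_k},\widetilde{u}_{h_k})$; using $b(u^{h_k},u^{h_k})=1$ this produces
\begin{eqnarray*}
\lambda^{h_k}-\widetilde{\lambda}_{h_k}
= \|u^{h_k}-\widetilde{u}_{h_k}\|_a^2-\widetilde{\lambda}_{h_k}\|u^{h_k}-\widetilde{u}_{h_k}\|_b^2,
\end{eqnarray*}
and the min--max principle gives $\lambda^{h_k}\ge\widetilde{\lambda}_{h_k}$, so the right-hand side is nonnegative and bounded above by $\|u^{h_k}-\widetilde{u}_{h_k}\|_a^2$, proving (\ref{E_h_less_P_h_eigenvalue}).

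The main obstacle is the clean bound on $|\alpha|$ in the decomposition used for (\ref{E_h_less_P_h_a}): the three generators $V_H$, $\widehat{u}_{h_k}$, $\widetilde{u}^{h_k}$ of $\widetilde{V}_{H,h_k}$ are nearly linearly dependent (both $\widehat{u}_{h_k}$ and $\widetilde{u}^{h_k}$ approximate the same eigenfunction), so one must show that this near-degeneracy does not blow up the coefficient. This is a standard perturbation-of-basis argument once $\widetilde{u}_{h_k}$ is known to be close to the eigenfunction of (\ref{weak_eigenvalue_problem}), but it is the one place where the proof requires care beyond invoking textbook Galerkin eigenvalue estimates.
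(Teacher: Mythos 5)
Your proposal is correct and follows essentially the same route as the paper: $u^{h_k}$ is treated as the Ritz/spectral projection of $\widetilde{u}_{h_k}$ onto $V_H^{h_k}\subset\widetilde{V}_{H,h_k}$, with (\ref{E_h_less_P_h_a}) obtained by choosing a competitor in $V_H^{h_k}$ that reduces the difference to a bounded multiple of $\widehat{u}_{h_k}-\widetilde{u}^{h_k}$, (\ref{E_h_less_P_h_b}) by the duality argument with $\eta_a(V_H^{h_k})\le\eta_a(H)$ since $V_H\subset V_H^{h_k}$, and (\ref{E_h_less_P_h_eigenvalue}) from the Rayleigh-quotient expansion of Lemma \ref{Rayleigh_quotient_expansion_lem}. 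Your explicit treatment of the coefficient $\alpha$ in the decomposition is in fact more careful than the paper, which absorbs that step into an unexplained generic constant.
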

\begin{proof}
Since $V_H^{h_k}\subset\widetilde{V}_{H,h_k}$, according to (\ref{cascadic_correct_eig_exact})
 and (\ref{Auxiliary_correct_eig_exact}), $u^{h_k}$
  can be viewed as the spectral projection of $\widetilde{u}_{h_k}$ (cf. \cite{Babuska1}).
  Then from Lemma \ref{Err_Eigen_Global_Lem} and the definitions of $\widetilde{V}_{H,h_k}$
  and $V_H^{h_k}$, we have
\begin{eqnarray}
\|\widetilde{u}_{h_k}-u^{h_k}\|_a &\le& C\inf_{v_H^{h_k}
\in V_H^{h_k}}\|\widetilde{u}_{h_k}-v_H^{h_k}\|_a \le
 C\inf_{v_H^{h_k}\in V_H^{h_k}}\|\widehat{u}_{h_k}-v_H^{h_k}\|_a \nonumber\\
&\le& C\|\widehat{u}_{h_k}-\widetilde{u}^{h_k}\|_a,
\end{eqnarray}
which is the desired result (\ref{E_h_less_P_h_a}).
Similarly, we also have (\ref{E_h_less_P_h_b}) by the following argument
\begin{eqnarray*}
\|\widetilde{u}_{h_{k}} - u^{h_{k}}\|_b
\le C\eta_a(V_H^{h_k})\|\widetilde{u}_{h_{k}} - u^{h_{k}}\|_a
\le C\eta_a(H)\|\widetilde{u}_{h_{k}} - u^{h_{k}}\|_a,
\end{eqnarray*}
where
\begin{equation*}
\eta_a(V_H^{h_k}):=\sup_{f\in L^2(\Omega),\|f\|_b=1}\inf_{v\in V_H^{h_k}}\|Tf-v\|_{a}\leq \eta_a(H).
\end{equation*}
Furthermore, (\ref{E_h_less_P_h_eigenvalue}) can be obtained directly
from Lemma \ref{Rayleigh_quotient_expansion_lem}
and the proof is complete.
\end{proof}

\begin{remark}
Since $V_H\subset V_H^{h_k}$ and $V_H\subset \widetilde{V}_{H,h_k}$,
from Lemma \ref{Err_Eigen_Global_Lem}, we have
\begin{equation}\label{u_hat_u_tilde_a_less_H}
\|u^{h_{k}} - \widetilde{u}_{h_{k}}\|_a \le \|u^{h_{k}} - u\|_a +
\| u- \widetilde{u}_{h_{k}}\|_a \le C\delta_{H}(\lambda).
\end{equation}
\end{remark}

\vspace{1ex} 
Now, we come to give error estimates for Algorithm \ref{Cascadic_MCS_Alg}.
\begin{theorem}\label{Cascadic_Convergence_Thm}
Assume the eigenpair approximation $(\lambda^{h_n}, u^{h_n})$ is
obtained by Algorithm \ref{Cascadic_MCS_Alg}, $(\widetilde{\lambda}_{h_n}, \widetilde{u}_{h_n})$
is obtained by Algorithm \ref{Auxiliary_MCS_Alg} and the smoother selected in each
level $V_{h_k}$ satisfy the smoothing property (\ref{smoothing_property}) for $k=1, \cdots, n$.
Under the conditions of Theorem \ref{super_approx_thm},
we have the following estimate:
\begin{equation}\label{cascadic_convergence_eigenfunction}
\|\widetilde{u}_{h_n} - u^{h_n}\|_a
\le C\sum_{k=2}^n \frac{\big(1+C\eta_a(H)\big)^{n-k}}{m_k^{\alpha}}\delta_{h_k}(\lambda),
\end{equation}
and the corresponding eigenvalue error estimate
\begin{equation}\label{cascadic_convergence_eigenvalue}
\big| \widetilde{\lambda}_{h_n} - \lambda^{h_n} \big|
\le C\|\widetilde{u}_{h_n} - u^{h_n}\|_a^2.
\end{equation}
\end{theorem}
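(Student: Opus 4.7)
The plan is to set up a one-level recursion for the error $e_k := \|u^{h_k} - \widetilde{u}_{h_k}\|_a$ between the cascadic iterate from Algorithm \ref{Cascadic_MCS_Alg} and the auxiliary iterate from Algorithm \ref{Auxiliary_MCS_Alg}, and then iterate it starting from $e_1 = 0$, which holds because on the coarsest level both algorithms solve the same discrete eigenvalue problem on $V_{h_1}$. The key device is an intermediate reference: the exact Galerkin solution $\widehat{u}^{h_{k+1}}\in V_{h_{k+1}}$ of the cascadic source problem (\ref{correct_source_exact_cas}), of which $\widetilde{u}^{h_{k+1}}$ is the smoothed surrogate.

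First I would apply Lemma \ref{E_h_less_P_h_Lem} to obtain $e_{k+1}\le C\|\widehat{u}_{h_{k+1}} - \widetilde{u}^{h_{k+1}}\|_a$ and triangulate via
\[
\|\widehat{u}_{h_{k+1}} - \widetilde{u}^{h_{k+1}}\|_a \le \|\widehat{u}_{h_{k+1}} - \widehat{u}^{h_{k+1}}\|_a + \|\widehat{u}^{h_{k+1}} - \widetilde{u}^{h_{k+1}}\|_a.
\]
The first piece compares two exact Galerkin source solutions on $V_{h_{k+1}}$ whose right-hand sides $\widetilde{\lambda}_{h_k}\widetilde{u}_{h_k}$ and $\lambda^{h_k}u^{h_k}$ differ only through the previous-level errors; testing with the error and using Cauchy--Schwarz gives a bound by $C\|\widetilde{\lambda}_{h_k}\widetilde{u}_{h_k} - \lambda^{h_k}u^{h_k}\|_b$, which after expansion and use of parts (\ref{E_h_less_P_h_b})--(\ref{E_h_less_P_h_eigenvalue}) of Lemma \ref{E_h_less_P_h_Lem}, together with the a priori smallness $e_k\le C\delta_H(\lambda)$ from (\ref{u_hat_u_tilde_a_less_H}) used to absorb the quadratic eigenvalue contribution, becomes a term of the form $(1+C\eta_a(H))\,e_k$.

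The second piece is where the smoother enters. Since $\widetilde{u}^{h_{k+1}}$ is produced by $m_{k+1}$ smoothing iterations starting from $u^{h_k}$, the standard error identity $\widehat{u}^{h_{k+1}} - \widetilde{u}^{h_{k+1}} = S_{h_{k+1}}^{m_{k+1}}(\widehat{u}^{h_{k+1}} - u^{h_k})$ combined with the smoothing property (\ref{smoothing_property}) gives $\|\widehat{u}^{h_{k+1}} - \widetilde{u}^{h_{k+1}}\|_a \le \frac{C}{m_{k+1}^{\alpha}h_{k+1}}\|\widehat{u}^{h_{k+1}} - u^{h_k}\|_b$. I would then absorb the troublesome $h_{k+1}^{-1}$ by an Aubin--Nitsche style bound $\|\widehat{u}^{h_{k+1}} - u^{h_k}\|_b \le Ch_{k+1}\delta_{h_k}(\lambda)$: compare $\widehat{u}^{h_{k+1}}$ to $T(\lambda^{h_k}u^{h_k})$ via the standard $L^2$ FE estimate for the source operator $T$ from (\ref{laplace_source_operator}), and compare both $u^{h_k}$ and $T(\lambda^{h_k}u^{h_k})$ to the exact eigenfunction $u$ using Lemma \ref{Err_Eigen_Global_Lem}, invoking the convex-domain equivalence (\ref{Estimates_Eta_Delta_Hh}) and the geometric recurrence (\ref{delta_recur_relation}) to finish with a contribution of $Cm_{k+1}^{-\alpha}\delta_{h_{k+1}}(\lambda)$.

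Assembling the two pieces yields the linear recursion $e_{k+1} \le Cm_{k+1}^{-\alpha}\delta_{h_{k+1}}(\lambda) + (1+C\eta_a(H))\,e_k$; unwinding from $e_1=0$ produces exactly (\ref{cascadic_convergence_eigenfunction}). The eigenvalue bound (\ref{cascadic_convergence_eigenvalue}) then follows from Lemma \ref{Rayleigh_quotient_expansion_lem} applied with $(\widetilde{\lambda}_{h_n},\widetilde{u}_{h_n})$ viewed as the reference eigenpair and $w=u^{h_n}$, noting that $\lambda^{h_n}$ is itself the Rayleigh quotient of $u^{h_n}$ and discarding the nonpositive $b$-norm term. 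The main obstacle I expect is precisely the Aubin--Nitsche bound on $\|\widehat{u}^{h_{k+1}} - u^{h_k}\|_b$ with the sharp $h_{k+1}$ factor: without that factor the $1/h_{k+1}$ coming from the smoothing property would destroy optimality, so the duality argument has to be executed carefully using $T$ and the approximation parameter $\eta_a(h_{k+1})\approx h_{k+1}$.
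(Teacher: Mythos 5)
Your overall architecture matches the paper's: the same recursion for $e_k=\|u^{h_k}-\widetilde{u}_{h_k}\|_a$ starting from $e_1=0$, the same reduction via Lemma \ref{E_h_less_P_h_Lem} to $\|\widehat{u}_{h_{k+1}}-\widetilde{u}^{h_{k+1}}\|_a$, the same split through the exact Galerkin solution $\widehat{u}^{h_{k+1}}$, and the same treatment of the data-perturbation term $\|\widehat{u}_{h_{k+1}}-\widehat{u}^{h_{k+1}}\|_a\le C\|\lambda^{h_k}u^{h_k}-\widetilde{\lambda}_{h_k}\widetilde{u}_{h_k}\|_b$. The eigenvalue estimate is also handled as in the paper.

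However, your treatment of the smoothed term contains a genuine gap. You apply the first smoothing estimate in (\ref{smoothing_property}) directly to the whole difference $\widehat{u}^{h_{k+1}}-u^{h_k}$ and then claim the Aubin--Nitsche bound $\|\widehat{u}^{h_{k+1}}-u^{h_k}\|_b\le Ch_{k+1}\delta_{h_k}(\lambda)$. That bound is not attainable: $u^{h_k}$ is a Ritz vector from the coarse-augmented space $V_H^{h_k}=V_H+{\rm span}\{\widetilde{u}^{h_k}\}$, and the only $L^2$-lifting available for quantities involving it is through $\eta_a(V_H^{h_k})\le\eta_a(H)\approx H$, not through $\eta_a(h_{k+1})\approx h_{k+1}$ --- this is exactly the content of (\ref{E_h_less_P_h_b}). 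Concretely, $\|\widehat{u}^{h_{k+1}}-u^{h_k}\|_b$ contains the contribution $\|\widetilde{u}_{h_k}-u^{h_k}\|_b\le C\eta_a(H)\|e_{h_k}\|_a$, so after multiplying by $C/(m_{k+1}^\alpha h_{k+1})$ you pick up a term of order $(H/h_{k+1})\|e_{h_k}\|_a$ in the recursion, with a factor that grows like $\beta^{k}$ instead of staying bounded by $1+C\eta_a(H)$; the resulting final error is $O(H/\bar{m}^\alpha)$ rather than $O(h_n/\bar{m}^\alpha)$, which destroys both the claimed estimate and the optimal complexity. The repair is the paper's finer decomposition
\begin{equation*}
S_{h_{k+1}}^{m_{k+1}}\bigl(\widehat{u}^{h_{k+1}}-u^{h_k}\bigr)
= S_{h_{k+1}}^{m_{k+1}}\bigl(\widehat{u}^{h_{k+1}}-\widehat{u}_{h_{k+1}}\bigr)
+ S_{h_{k+1}}^{m_{k+1}}\bigl(\widehat{u}_{h_{k+1}}-\widetilde{u}_{h_k}\bigr)
+ S_{h_{k+1}}^{m_{k+1}}\bigl(\widetilde{u}_{h_k}-u^{h_k}\bigr),
\end{equation*}
where the first and third pieces are handled with the non-expansiveness $\|S_h^m w_h\|_a\le\|w_h\|_a$ (both are $O(\|e_{h_k}\|_a)$ in the $a$-norm, yielding the $(1+C\eta_a(H))\|e_{h_k}\|_a$ factor), and only the middle piece --- which involves solely auxiliary-algorithm quantities --- receives the $\tfrac{C}{m^\alpha}\tfrac1h\|\cdot\|_b$ bound; for that piece Theorem \ref{super_approx_thm} and Lemma \ref{Err_Eigen_Global_Lem} give $\|\widehat{u}_{h_{k+1}}-\widetilde{u}_{h_k}\|_b\le C\eta_a(h_{k+1})\delta_{h_{k+1}}(\lambda)$, which is where the $h_{k+1}$ you need actually comes from. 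Your instinct that the duality step is the crux was right; it just cannot be executed on the difference you chose.
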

\begin{proof}
 Define $e_{h_k}:=u^{h_k} - \widetilde{u}_{h_k}$ for $k=1, \cdots, n$.
 Then it is easy to see that $e_{h_1}=0$.

 From Lemma \ref{E_h_less_P_h_Lem}, the following inequalities hold
\begin{eqnarray} \label{cascadic_convergence_break_1}
\|e_{h_{k+1}}\|_a &=& \|u^{h_{k+1}} - \widetilde{u}_{h_{k+1}}\|_a
\le C\|\widehat{u}_{h_{k+1}} - \widetilde{u}^{h_{k+1}}\|_a \nonumber\\
&\le& C\Big(\|\widehat{u}_{h_{k+1}} - \widehat{u}^{h_{k+1}}\|_a
+ \|\widehat{u}^{h_{k+1}} - \widetilde{u}^{h_{k+1}}\|_a\Big).
\end{eqnarray}
For the first term in (\ref{cascadic_convergence_break_1}), together
 with (\ref{correct_source_exact_cas}), (\ref{Auxiliary_correct_source_exact}),
 Lemma \ref{E_h_less_P_h_Lem} and (\ref{u_hat_u_tilde_a_less_H}), we have
\begin{eqnarray}\label{cascadic_convergence_break_2}
\|\widehat{u}_{h_{k+1}} - \widehat{u}^{h_{k+1}}\|_a &\le& C\|\lambda^{h_k}u^{h_k}
- \widetilde{\lambda}_{h_k}\widetilde{u}_{h_k}\|_b \le C\Big( \|u^{h_k}
-\widetilde{u}_{h_k}\|_a^2 + \|u^{h_k}-\widetilde{u}_{h_k}\|_b \Big) \nonumber\\
&\le& C\eta_a(H)\|u^{h_k}-\widetilde{u}_{h_k}\|_a = C\eta_a(H)\|e_{h_k}\|_a.
\end{eqnarray}
For the second term in (\ref{cascadic_convergence_break_1}), due to
(\ref{smoothing_property}) and (\ref{cascadic_convergence_break_2}),
 the following estimates hold
\begin{eqnarray}
&&\|\widehat{u}^{h_{k+1}} - \widetilde{u}^{h_{k+1}}\|_a  =
\|S_{h_{k+1}}^{m_{k+1}}(\widehat{u}^{h_{k+1}} - u^{h_k})\|_a   \nonumber \\
&\le& \|S_{h_{k+1}}^{m_{k+1}}(\widehat{u}^{h_{k+1}} - \widetilde{u}_{h_k})\|_a
+ \|S_{h_{k+1}}^{m_{k+1}}(\widetilde{u}_{h_k} - u^{h_k})\|_a  \nonumber \\
&\le& \|S_{h_{k+1}}^{m_{k+1}}(\widehat{u}^{h_{k+1}}-\widehat{u}_{h_{k+1}})\|_a
+ \|S_{h_{k+1}}^{m_{k+1}}(\widehat{u}_{h_{k+1}} - \widetilde{u}_{h_{k}})\|_a
+ \|\widetilde{u}_{h_k} - u^{h_k}\|_a\nonumber \\
&\le& \|\widehat{u}_{h_{k+1}} - \widehat{u}^{h_{k+1}}\|_a +
\frac{C}{m_{k+1}^{\alpha}}\frac{1}{h_{k+1}}\|\widehat{u}_{h_{k+1}}
- \widetilde{u}_{h_{k}}\|_b + \|\widetilde{u}_{h_k} - u^{h_k}\|_a \nonumber \\
&\le&(1+C\eta_a(H))\|e_{h_k}\|_a +
\frac{C}{m_{k+1}^{\alpha}}\frac{1}{h_{k+1}}\|\widehat{u}_{h_{k+1}}
- \widetilde{u}_{h_{k}}\|_b. \label{cascadic_convergence_break_3}
\end{eqnarray}
According to Lemma \ref{Err_Eigen_Global_Lem}, (\ref{delta_recur_relation}),
Theorem \ref{super_approx_thm} and its proof,
\begin{eqnarray} \label{cascadic_convergence_break_4}
\|\widehat{u}_{h_{k+1}} - \widetilde{u}_{h_{k}}\|_b &\le&
\|\widehat{u}_{h_{k+1}} - \bar{u}_{h_{k+1}}\|_b +
\|\bar{u}_{h_{k+1}} - \bar{u}_{h_{k}}\|_b + \|\bar{u}_{h_{k}} - \widetilde{u}_{h_{k}}\|_b \nonumber\\
&\le& C\eta_a(h_{k+1})\delta_{h_{k+1}}(\lambda).
\end{eqnarray}
Combining (\ref{cascadic_convergence_break_1}), (\ref{cascadic_convergence_break_2}),
(\ref{cascadic_convergence_break_3}), (\ref{cascadic_convergence_break_4})
 and (\ref{Estimates_Eta_Delta_Hh}), we have
\begin{equation}\label{e_k_recur}
\|e_{h_{k+1}}\|_a \le \big(1+C\eta_a(H)\big)\|e_{h_k}\|_a
+ \frac{C}{m_{k+1}^{\alpha}}\delta_{h_{k+1}}(\lambda), \ \ k=1,\cdots,n-1.
\end{equation}
Based on (\ref{e_k_recur}), the fact $e_{h_1}=0$ and the
recursive argument, the following estimates hold
\begin{eqnarray*}
\|e_{h_n}\| &\le& \big(1+C\eta_a(H)\big)\|e_{h_{n-1}}\|_a +
\frac{C}{m_{n}^{\alpha}}\delta_{h_{n}}(\lambda)  \\
&\le& \big(1+C\eta_a(H)\big)^2\|e_{h_{n-2}}\|_a +
\big(1+C\eta_a(H)\big)\frac{C}{m_{n-1}^{\alpha}}\delta_{h_{n-1}}(\lambda)
+\frac{C}{m_{n}^{\alpha}}\delta_{h_{n}}(\lambda) \\
&\le& C\sum_{k=2}^n \big(1+C\eta_a(H)\big)^{n-k}\frac{1}{m_k^{\alpha}}\delta_{h_k}(\lambda).
\end{eqnarray*}
This is the desired result (\ref{cascadic_convergence_eigenfunction}).
The estimate (\ref{cascadic_convergence_eigenvalue}) can be obtained
from Lemma \ref{Rayleigh_quotient_expansion_lem} and
(\ref{cascadic_convergence_eigenfunction}).
\end{proof}

\begin{corollary}
Under the conditions of Theorem \ref{Cascadic_Convergence_Thm},
we have the following estimates:
\begin{eqnarray}
\|\bar{u}_{h_n} - u^{h_n}\|_a &\le& C\Big( \eta_a(h_n)\delta_{h_n}(\lambda)+  \sum_{k=2}^n \frac{\big(1+C\eta_a(H)\big)^{n-k}}{m_k^{\alpha}}\delta_{h_k}(\lambda)  \Big),\\
|\bar{\lambda}_{h_n} - \lambda^{h_n}| &\le& \|\bar{u}_{h_n} - u^{h_n}\|_a^2.
\end{eqnarray}
\end{corollary}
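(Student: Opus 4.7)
The proof will be essentially a triangle-inequality bridging argument, using the auxiliary multilevel quantity $\widetilde{u}_{h_n}$ (produced by Algorithm \ref{Auxiliary_MCS_Alg}) as an intermediate object between the standard Galerkin solution $\bar{u}_{h_n}$ and the cascadic output $u^{h_n}$. Concretely, the plan is to write
\begin{equation*}
\|\bar{u}_{h_n}-u^{h_n}\|_a \;\le\; \|\bar{u}_{h_n}-\widetilde{u}_{h_n}\|_a + \|\widetilde{u}_{h_n}-u^{h_n}\|_a,
\end{equation*}
bound the first term by the superapproximation estimate \eqref{super_approx_eigenfunction} of Theorem \ref{super_approx_thm}, which gives $\|\bar{u}_{h_n}-\widetilde{u}_{h_n}\|_a \le C\eta_a(h_n)\delta_{h_n}(\lambda)$, and bound the second term by \eqref{cascadic_convergence_eigenfunction} of Theorem \ref{Cascadic_Convergence_Thm}, which gives the geometric sum $C\sum_{k=2}^n (1+C\eta_a(H))^{n-k} m_k^{-\alpha}\delta_{h_k}(\lambda)$. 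Adding these two bounds yields the first displayed inequality of the corollary.

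For the eigenvalue estimate, I would apply Lemma \ref{Rayleigh_quotient_expansion_lem} in its discrete version. Since $\bar{u}_{h_n}\in V_{h_n}$ is an eigenfunction of the Galerkin problem \eqref{Weak_Eigenvalue_Discrete} and $u^{h_n}\in V_H^{h_n}\subset V_{h_n}$ with $b(u^{h_n},u^{h_n})=1$, the proof of Lemma \ref{Rayleigh_quotient_expansion_lem} carries over verbatim at the discrete level (it uses only the variational identity and the $b$-normalization), so
\begin{equation*}
\lambda^{h_n}-\bar{\lambda}_{h_n} \;=\; a(u^{h_n}-\bar{u}_{h_n},u^{h_n}-\bar{u}_{h_n}) - \bar{\lambda}_{h_n}\,b(u^{h_n}-\bar{u}_{h_n},u^{h_n}-\bar{u}_{h_n}).
\end{equation*}
Since the $b$-norm term is of higher order (it is bounded via the standard $L^2$--$H^1$ duality estimate by $\eta_a(h_n)\|u^{h_n}-\bar{u}_{h_n}\|_a$, cf.\ Lemma \ref{Err_Eigen_Global_Lem}), it is absorbed into the generic constant, giving $|\bar{\lambda}_{h_n}-\lambda^{h_n}|\le C\|\bar{u}_{h_n}-u^{h_n}\|_a^2$, as stated.

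There is no substantive obstacle here: both ingredients are already in hand, and the argument is a direct triangle inequality together with the now-standard Rayleigh-quotient identity applied at the discrete level. The only point requiring mild care is making sure that the constant $C$ absorbing the lower-order $\|\cdot\|_b^2$ contribution in the eigenvalue identity is independent of $n$ and of the mesh parameters, which follows from the uniform boundedness of $\bar{\lambda}_{h_n}$ in terms of $\lambda$ guaranteed by Lemma \ref{Err_Eigen_Global_Lem}.
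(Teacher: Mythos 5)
Your proposal is correct and is exactly the argument the paper intends: the corollary is stated without proof precisely because it follows from the triangle inequality $\|\bar{u}_{h_n}-u^{h_n}\|_a\le\|\bar{u}_{h_n}-\widetilde{u}_{h_n}\|_a+\|\widetilde{u}_{h_n}-u^{h_n}\|_a$ combined with (\ref{super_approx_eigenfunction}) and (\ref{cascadic_convergence_eigenfunction}), and the eigenvalue bound from the discrete Rayleigh--quotient expansion, just as you describe. The only cosmetic discrepancy is that your handling of the $\|\cdot\|_b^2$ term introduces a constant $C$ in the eigenvalue estimate while the paper's statement omits it (as it does in (\ref{super_approx_eigenvalue})); this does not affect correctness.
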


\vspace{1ex}
Now we come to estimate the computational work for Algorithm \ref{Cascadic_MCS_Alg}.
Define the dimension of each linear finite element space as
\begin{equation*}
N_k:=\text{\rm dim}~V_{h_k},\ \ k=1,\cdots,n.
\end{equation*}
Then we have
\begin{equation}\label{N_h_recur_relation}
N_k\approx\Big(\frac{h_k}{h_n}\Big)^{-d}N_n = \Big(\frac{1}{\beta}\Big)^{d(n-k)}N_n,\ \ k=1,\cdots,n.
\end{equation}
From Theorem \ref{Cascadic_Convergence_Thm}, in order to control the global error,
 it is required that the number of iterations in the coarser spaces
 should be larger than the fine spaces. To give a precise analysis for the final
 error and complexity estimates, we assume the following inequality holds for the
  number of iterations in each level mesh:
\begin{equation}\label{iter_reccur_relation}
\Big(\frac{h_k}{h_{n}}\Big)^{\zeta} \le \frac{m_k^{\alpha}}{\bar{m}^{\alpha}}
 \le \sigma \Big(\frac{h_k}{h_{n}}\Big)^{\zeta},\ \ \ \ \ \ \  k=2,\cdots,n-1,
\end{equation}
where $ \bar{m} = m_{n} $, $\sigma > 1$ and $\zeta >1$ are some appropriate constants.

Now, we give the final error and the complexity estimates for Algorithm \ref{Cascadic_MCS_Alg}.
\begin{theorem}\label{estimate_number_iter}
Under the conditions  (\ref{delta_recur_relation}), (\ref{iter_reccur_relation})
and $\beta^{1-\zeta}(1+CH)<1$, for any given $\gamma \in (0,1]$, the final error estimate
\begin{eqnarray}\label{Final_Error_Estimate}
\|u^{h_{n}} - \widetilde{u}_{h_{n}}\|_a \le \gamma h_{n}
\end{eqnarray}
holds if we take
\begin{eqnarray}\label{Condition_m_bar}
\bar{m}>\Big(\frac{CC_{\zeta}}{\gamma}\Big)^{\frac{1}{\alpha}},
\end{eqnarray}
where $C_\zeta = {1}/{(1-\beta^{1-\zeta}(1+CH))}$.

Assume the eigenvalue problem solved in the coarse spaces $V_{H}$ and $V_{h_1}$ need work
$M_H$ and $M_{h_1}$, respectively.
If $\zeta/\alpha <d$, the total computational work of Algorithm \ref{Cascadic_MCS_Alg} can be bounded by
$\mathcal{O}(N_{n}+M_{h_1}+M_H\log(N_{n}))$  and furthermore $\mathcal{O}(N_{n})$
provided  $M_H \ll N_{n}$ and $M_{h_1}\leq N_{n}$.
while if $\zeta/\alpha =d$, the total computational work can be bounded by
$\mathcal{O}(N_{n}\log(N_{n})+M_{h_1}+M_H\log(N_{n}))$  and furthermore $\mathcal{O}(N_{n}\log(N_{n}))$
provided  $M_H \ll N_{n}$ and $M_{h_1}\leq N_{n}$.
\end{theorem}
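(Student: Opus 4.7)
\textbf{Proof plan for Theorem \ref{estimate_number_iter}.}
My plan is to plug the iteration budget (\ref{iter_reccur_relation}) and the geometric refinement (\ref{delta_recur_relation})/(\ref{mesh_size_recur}) into the error bound (\ref{cascadic_convergence_eigenfunction}) of Theorem \ref{Cascadic_Convergence_Thm}, and then to sum a geometric series. First I would use the lower bound in (\ref{iter_reccur_relation}) to write $m_k^{\alpha}\ge \bar m^{\alpha}(h_k/h_n)^{\zeta}$, and use the refinement to record $h_k/h_n\approx \beta^{n-k}$ and $\delta_{h_k}(\lambda)\approx h_k\approx \beta^{n-k}h_n$. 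Substituting into (\ref{cascadic_convergence_eigenfunction}) and using $\eta_a(H)\approx H$, one gets
\begin{equation*}
\|\widetilde u_{h_n}-u^{h_n}\|_a\le \frac{Ch_n}{\bar m^{\alpha}}\sum_{k=2}^{n}\bigl[(1+CH)\beta^{1-\zeta}\bigr]^{n-k}.
\end{equation*}
Under the hypothesis $(1+CH)\beta^{1-\zeta}<1$ this is a convergent geometric series with sum at most $C_{\zeta}=1/(1-(1+CH)\beta^{1-\zeta})$, yielding $\|\widetilde u_{h_n}-u^{h_n}\|_a\le C C_{\zeta}h_n/\bar m^{\alpha}$. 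Imposing the right-hand side to be $\le \gamma h_n$ gives exactly (\ref{Condition_m_bar}).

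For the complexity, I would next split the total cost into three pieces: (i) the initial eigenvalue solve on $V_{h_1}$ of cost $M_{h_1}$; (ii) the smoothing work on each level $V_{h_k}$, which is $O(m_k N_k)$ per level since each smoothing step is linear in the number of unknowns; and (iii) the $(n-1)$ low-dimensional eigenvalue solves on $V_H^{h_k}$ of cost $M_H$ each (plus an $O(N_k)$ assembly that is absorbed into the smoothing work). The upper bound in (\ref{iter_reccur_relation}) gives $m_k\le C\bar m\,\beta^{(n-k)\zeta/\alpha}$, and (\ref{N_h_recur_relation}) gives $N_k\approx \beta^{-d(n-k)}N_n$, so
\begin{equation*}
\sum_{k=2}^{n}m_k N_k\le C\bar m N_n\sum_{k=2}^{n}\beta^{(n-k)(\zeta/\alpha-d)}.
\end{equation*}
When $\zeta/\alpha<d$ the common ratio is $<1$ and the sum is bounded by a constant, so the smoothing work is $O(N_n)$; when $\zeta/\alpha=d$ every summand is $1$, giving $O(N_n\log N_n)$ since $n\approx \log_{\beta}(h_1/h_n)=O(\log N_n)$. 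Piece (iii) contributes $(n-1)M_H=O(M_H\log N_n)$, and collecting (i)–(iii) yields the two stated bounds. The final simplification to $\mathcal{O}(N_n)$ (respectively $\mathcal{O}(N_n\log N_n)$) follows under $M_H\ll N_n$ and $M_{h_1}\le N_n$.

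The only delicate step is the bookkeeping in the geometric-sum argument: one must verify that the factor $(1+CH)$ coming from the error accumulation and the factor $\beta^{1-\zeta}$ coming from the iteration budget truly combine into a ratio strictly less than $1$, which is guaranteed precisely by the hypothesis $(1+CH)\beta^{1-\zeta}<1$ of the theorem. Once that ratio is secured the rest of the argument is a straightforward substitution, and the complexity analysis reduces to recognising the two regimes $\zeta/\alpha<d$ and $\zeta/\alpha=d$ for a geometric series indexed by $n-k$.
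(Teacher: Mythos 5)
Your proposal is correct and follows essentially the same route as the paper: substitute the iteration budget (\ref{iter_reccur_relation}) and the mesh/approximation relations into (\ref{cascadic_convergence_eigenfunction}), sum the geometric series with ratio $\beta^{1-\zeta}(1+CH)<1$ to get the $CC_{\zeta}h_n/\bar m^{\alpha}$ bound, and then bound the work level by level via $\sum_k m_k N_k$ with the same two regimes $\zeta/\alpha<d$ and $\zeta/\alpha=d$. No substantive differences from the paper's own argument.
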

\begin{proof}
By Theorem \ref{Cascadic_Convergence_Thm}, together with (\ref{mesh_size_recur}),
 (\ref{Estimates_Eta_Delta_Hh}), (\ref{cascadic_convergence_eigenfunction})
 and (\ref{iter_reccur_relation}), we have the following estimates:
\begin{eqnarray}\label{m_k_cascadic_convergence}
\|u^{h_{n}} - \widetilde{u}_{h_{n}}\|_a &\leq& C \sum_{k=2}^{n}(1+C\eta_a(H))^{n-k}
\frac{1}{m_k^{\alpha}}\delta_{h_k}(\lambda) \le C \sum_{k=2}^{n}
 (1+CH)^{n -k} \frac{1}{\bar{m}^{\alpha}}
 \Big(\frac{h_k}{h_{n}}\Big)^{-\zeta}h_k \nonumber\\
&\le& C\sum_{k=2}^{n} (1+CH)^{n -k}
\beta^{(n-k)(1-\zeta)}\frac{h_{n}}{\bar{m}^{\alpha}}
= C\frac{h_{n}}{\bar{m}^{\alpha}}\sum_{k=0}^{n-2}
 \big(\beta^{1-\zeta}(1+CH)\big)^{k}\nonumber\\
&=& C\frac{h_{n}}{\bar{m}^{\alpha}}\frac{1}{1-\beta^{1-\zeta}(1+CH)}.
\end{eqnarray}
When $\beta^{1-\zeta}(1+CH)<1$, (\ref{m_k_cascadic_convergence}) becomes
\begin{equation}\label{m_l_cascadic_convergence}
\|u^{h_{n}} - \bar{u}_{h_{n}}\|_a \le \frac{CC_{\zeta}}{\bar{m}^{\alpha}}h_{n}.
\end{equation}
 Then it is obvious that we can obtain
$\|u^{h_{n}} - \widetilde{u}_{h_{n}}\|_a \le \gamma h_{n}$ when $\bar{m}$ satisfies the condition
(\ref{Condition_m_bar}).

Let $W$ denote the whole computational work of Algorithm \ref{Cascadic_MCS_Alg},
$w_k$ the work on the $k$-th level for $k=1,\cdots,n$.
Based on the definition of Algorithms \ref{Smooth_Correction_Alg} and \ref{Cascadic_MCS_Alg},
(\ref{mesh_size_recur}), (\ref{iter_reccur_relation})  and (\ref{N_h_recur_relation}), the following estimates hold
\begin{eqnarray*}
W &=& \sum_{k=1}^{n} w_k \le M_{h_1} + \sum_{k=2}^{n} m_k N_k + M_H \log_{\beta}(N_{n}) \\
&\leq& M_{h_1} + C M_H\log(N_{n})+ \bar{m}\sigma^{1/\alpha}
N_{n}\sum_{k=2}^{n} \Big(\frac{1}{\beta}\Big)^{(n-k)(d-\zeta/\alpha)}.
\end{eqnarray*}
Then we know that the computation work $W$ can be bounded by $\mathcal{O}(M_{h_1}+M_H\log(N_{n})+N_{n})$
when $d-\zeta/\alpha >0$ and by  $\mathcal{O}(M_{h_1}+M_H\log(N_{n})+N_{n}\log(N_{n}))$ when
 $d-\zeta/\alpha =0$.  It is also obvious they can be bounded by $\mathcal{O}(N_{n})$ and
 $\mathcal{O}(N_{n}\log(N_{n}))$, respectively, if $M_H \ll N_{n}$ and $M_{h_1}\le N_{n}$ are provided.
\end{proof}

\begin{corollary}
  Under the same conditions of Theorem \ref{estimate_number_iter} and (\ref{Condition_m_bar}) holding, if $Ch_n\le\gamma$, then we have the following estimate
  \begin{equation}\label{final_error_u_bar}
    \|u^{h_n}-\bar{u}_{h_n}\|_a \le 2\gamma h_n.
  \end{equation}
\end{corollary}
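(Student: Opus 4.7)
The plan is to prove the bound by inserting $\widetilde{u}_{h_n}$ and applying the triangle inequality
\begin{equation*}
\|u^{h_n} - \bar{u}_{h_n}\|_a \le \|u^{h_n} - \widetilde{u}_{h_n}\|_a + \|\widetilde{u}_{h_n} - \bar{u}_{h_n}\|_a,
\end{equation*}
so that the proof reduces to bounding each of the two pieces by $\gamma h_n$.

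First, the term $\|u^{h_n} - \widetilde{u}_{h_n}\|_a$ is controlled by Theorem \ref{estimate_number_iter} together with the hypothesis (\ref{Condition_m_bar}): by the final error estimate (\ref{Final_Error_Estimate}), we have $\|u^{h_n} - \widetilde{u}_{h_n}\|_a \le \gamma h_n$ directly. No further work is needed here.

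Second, the term $\|\widetilde{u}_{h_n} - \bar{u}_{h_n}\|_a$ is the superapproximation error from Theorem \ref{super_approx_thm}, which gives $\|\bar{u}_{h_n} - \widetilde{u}_{h_n}\|_a \le C \eta_a(h_n)\delta_{h_n}(\lambda)$. Using the simple-eigenvalue/convex-domain asymptotics (\ref{Estimates_Eta_Delta_Hh}) we have $\eta_a(h_n)\approx h_n$ and $\delta_{h_n}(\lambda)\approx h_n$, so this term is $\mathcal{O}(h_n^2)$. By the assumption $Ch_n \le \gamma$, we upgrade this to $\|\bar{u}_{h_n} - \widetilde{u}_{h_n}\|_a \le (Ch_n)\,h_n \le \gamma h_n$.

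Combining the two bounds yields the claimed estimate (\ref{final_error_u_bar}). There is no real obstacle here: the superapproximation inequality is already $h_n$ times smaller than the discretization error, and the hypothesis $Ch_n\le \gamma$ is precisely what is needed to absorb the implicit constant into $\gamma$ so that both contributions sit under the same bound $\gamma h_n$. The only mild subtlety is making sure that the constants $C$ appearing in the corollary hypothesis and in Theorem \ref{super_approx_thm} are matched; this is a standard relabeling since both ultimately depend only on the eigenvalue $\lambda$ and on the shape-regularity of the meshes.
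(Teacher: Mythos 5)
Your proof is correct and is exactly the argument the corollary is designed for (the paper omits the proof, but the factor $2\gamma$ signals precisely this split): a triangle inequality through $\widetilde{u}_{h_n}$, with $\|u^{h_n}-\widetilde{u}_{h_n}\|_a\le\gamma h_n$ from (\ref{Final_Error_Estimate}) under (\ref{Condition_m_bar}), and $\|\bar{u}_{h_n}-\widetilde{u}_{h_n}\|_a\le C\eta_a(h_n)\delta_{h_n}(\lambda)\approx Ch_n^2\le\gamma h_n$ from Theorem \ref{super_approx_thm}, (\ref{Estimates_Eta_Delta_Hh}) and the hypothesis $Ch_n\le\gamma$. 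Nothing is missing.
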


\vspace{1ex}
If we choose the conjugate gradient method as the smoothing operator, then $\alpha = 1$
and the computation work of Algorithm \ref{Cascadic_MCS_Alg} can be bounded by
$\mathcal{O}(N_{n}+M_{h_1}+M_H\log(N_{n}))$ or $\mathcal{O}(N_{n})$
provided $M_H \ll N_{n}$ and $M_{h_1}\leq N_{n}$  for both
 $d=2$ and $d=3$ when we choose $1 < \zeta < d$.

When the symmetric Gauss-Seidel, the SSOR, the damped Jacobi or the Richardson  iteration act as
the smoothing  operator,  we know $\alpha=1/2$. Then  the computation work of
Algorithm \ref{Cascadic_MCS_Alg} can be bounded by
 $\mathcal{O}(N_{n}+M_{h_1}+M_H\log(N_{n}))$ ($\mathcal{O}(N_{n})$
provided $M_H \ll N_{n}$ and $M_{h_1}\leq N_{n}$)  only for $d=3$  when we choose $1 < \zeta < 3/2$.
In the case  of $\alpha=1/2$  and $d=2$,  from Theorem \ref{estimate_number_iter} and its proof, we can
only choose $\zeta=1$ and then the final error has the estimate
$\|u^{h_{n}}-\bar{u}_{h_{n}}\|_a \leq Ch_{n}|\log(h_{n})|$
and the computational work can only be bounded by
$\mathcal{O}(N_{n}\log(N_{n})+M_{h_1}+M_H\log(N_{n}))$ ($\mathcal{O}(N_{n}\log(N_{n}))$
provided $M_H \ll N_{n}$ and $M_{h_1}\leq N_{n}$).

\section{Numerical tests}
In this section, two numerical examples are presented to illustrate the
efficiency of the cascadic multigrid scheme (Algorithm \ref{Cascadic_MCS_Alg})
 proposed in this paper. Here, we choose the conjugate-gradient iteration as the
smoothing operator ($\alpha=1$) and the number of iteration steps by
\begin{eqnarray*}
m_k = \lceil\sigma\times 2^{\zeta(n-k)}\rceil\ \ \ {\rm for}\  k=2, \cdots, n
\end{eqnarray*}
with $\sigma=2$, $\zeta=1.01$ and $\lceil r\rceil$
denoting the smallest integer which is not less than $r$
\subsection{Model eigenvalue problem}
Here we give the numerical results of the cascadic multigrid
scheme for Laplace eigenvalue problem on the two dimensional domain
$\Omega=(0,1 )\times (0, 1)$.  The sequence of
finite element spaces are constructed by
using linear element on the series of mesh which are produced by
regular refinement with $\beta =2$ (connecting the midpoints of each edge).
In this example, we use two meshes which are generated by Delaunay method as the initial mesh
$\mathcal{T}_{h_1}$ and set $\mathcal{T}_H=\mathcal{T}_{h_1}$
to investigate the convergence behaviors.
Figure \ref{Initial_Mesh} shows the corresponding
initial meshes: one is coarse and the other is fine.

Algorithm \ref{Cascadic_MCS_Alg} is applied to solve the eigenvalue problem.
For comparison, we also solve the eigenvalue problem by the direct finite element method.
\begin{figure}[htb]
\centering
\includegraphics[width=5cm,height=4.5cm]{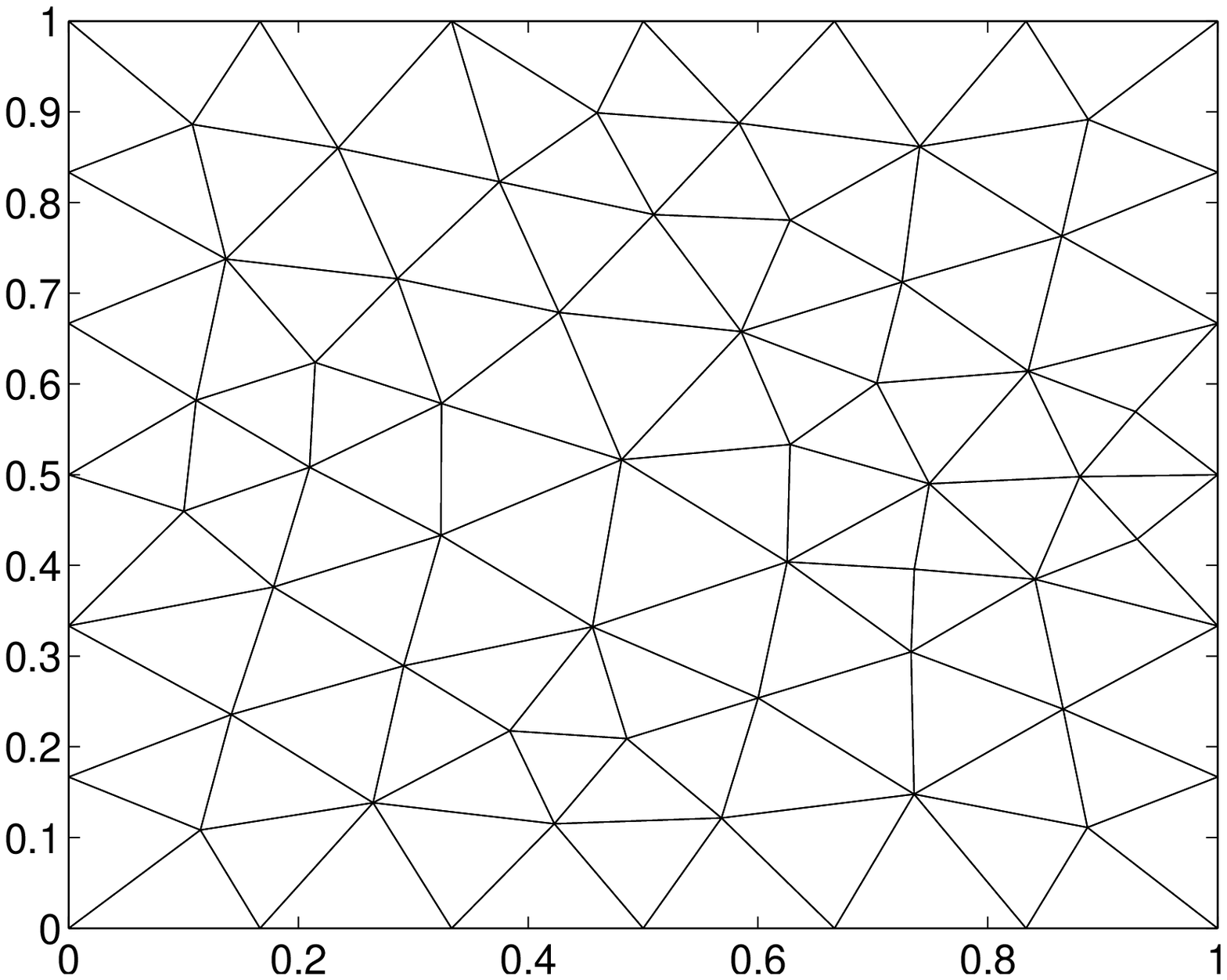}
\includegraphics[width=5cm,height=4.5cm]{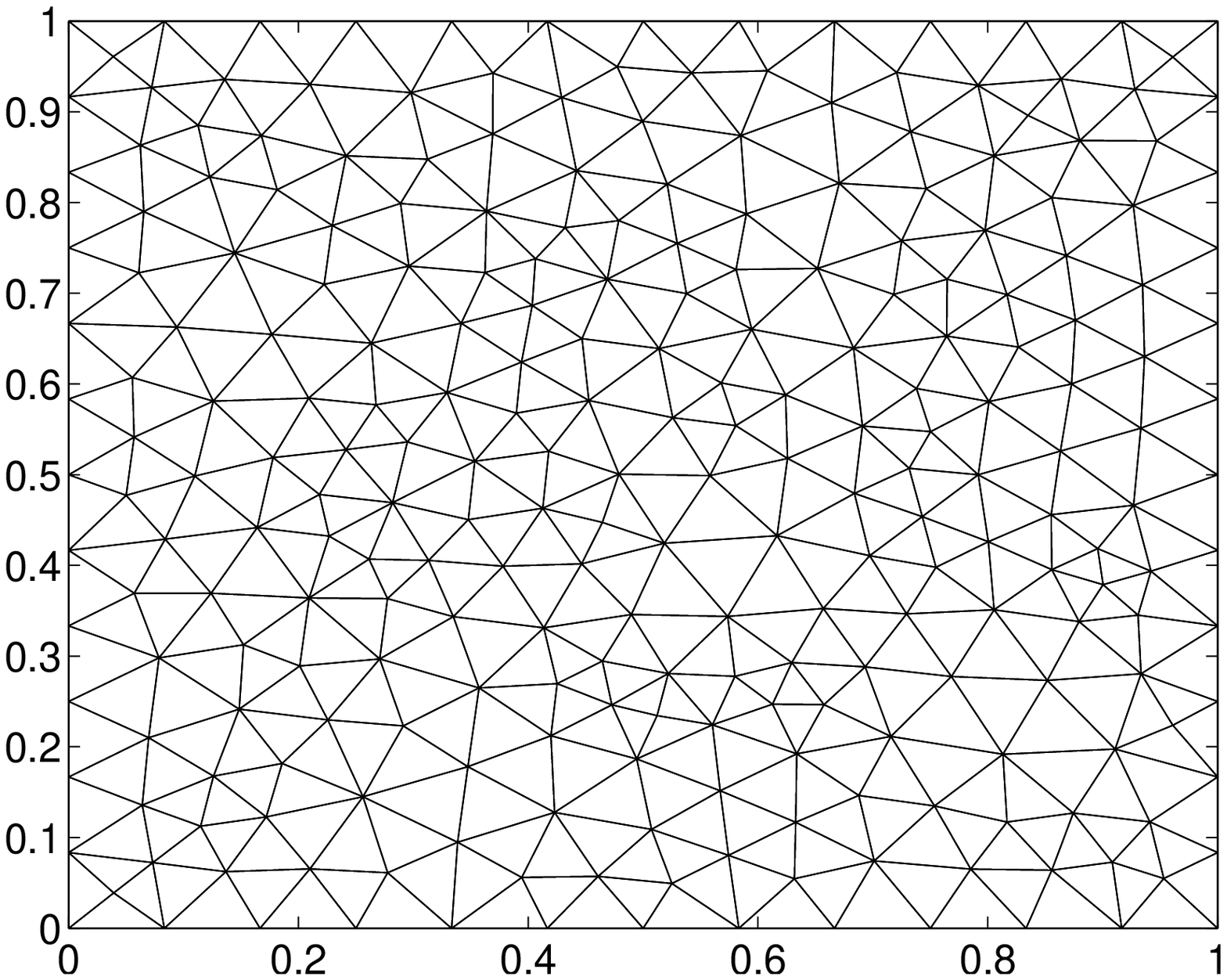}
\caption{\small\texttt The coarse and fine initial meshes for Example 1}
\label{Initial_Mesh}
\end{figure}

Figure \ref{numerical_multi_grid_2D}
gives the corresponding numerical results for the first eigenvalue
$\lambda_1=2\pi^2$ and the corresponding eigenfunction on the two initial meshes
 illustrated in Figure \ref{Initial_Mesh}.
\begin{figure}[htb]
\centering
\includegraphics[width=6cm,height=5cm]{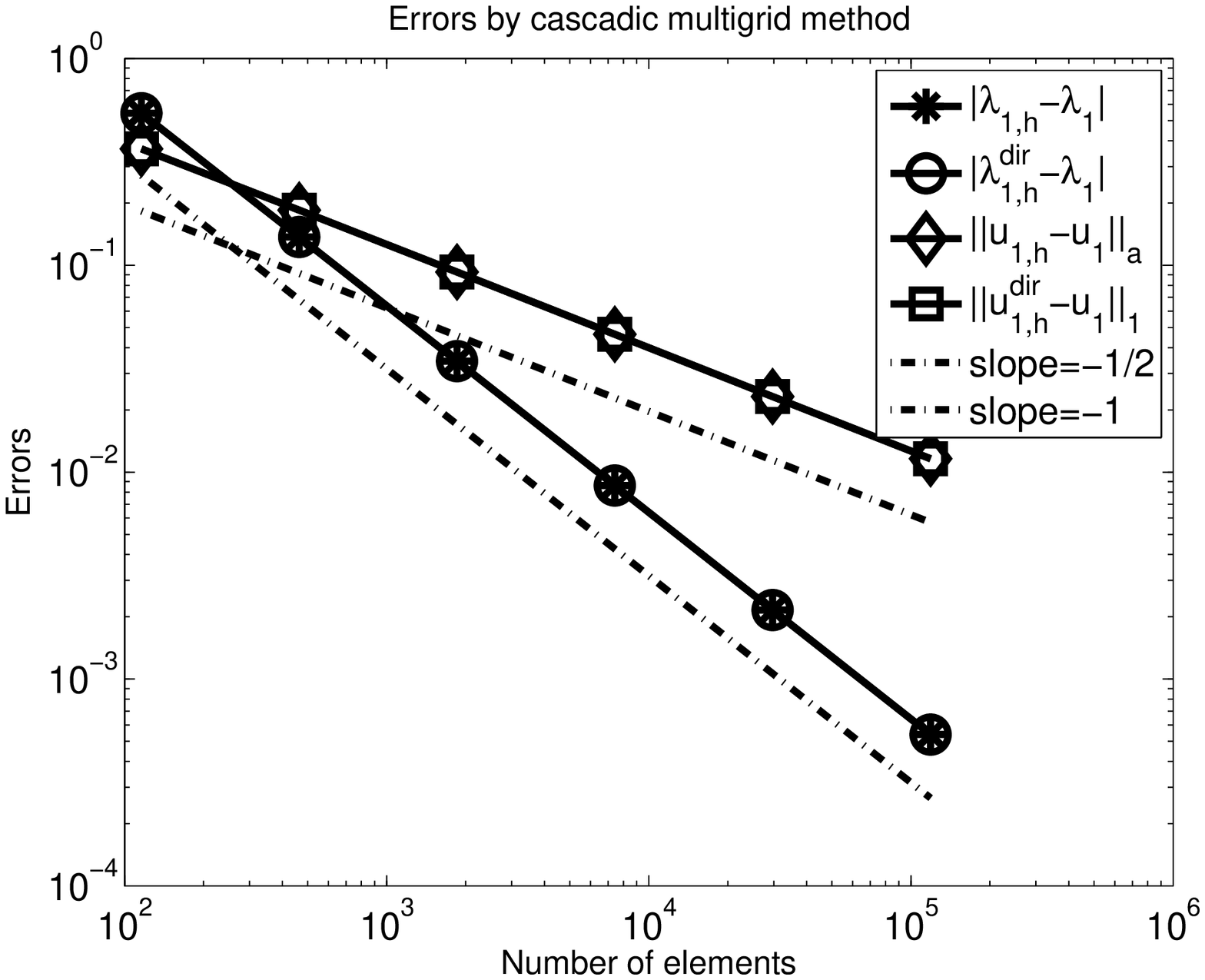}
\includegraphics[width=6cm,height=5cm]{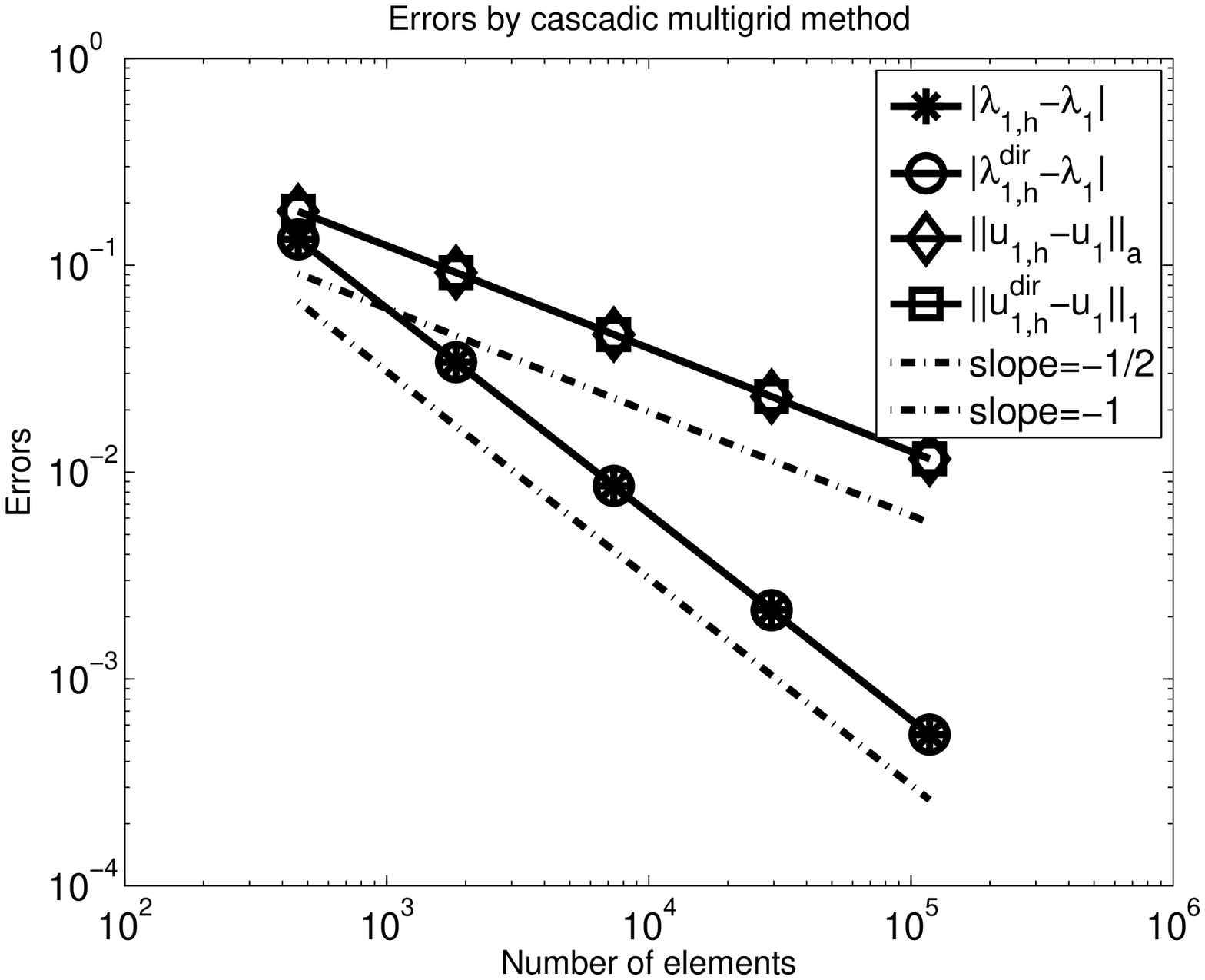}
\caption{\small\texttt The errors of the multigrid
algorithm for the first eigenvalue $2\pi^2$ and the corresponding eigenfunction,
where $u_h$ and $\lambda_h$ denote the eigenfunction and eigenvalue approximation by Algorithm \ref{Cascadic_MCS_Alg}, and $u_h^{\rm dir}$ and $\lambda_h^{\rm dir}$ denote the eigenfunction
 and eigenvalue approximation by direct eigenvalue solving (The left figure
 corresponds to the left mesh in Figure \ref{Initial_Mesh} and
the right figure corresponds to the right mesh in Figure \ref{Initial_Mesh}) }
\label{numerical_multi_grid_2D}
\end{figure}

From Figure \ref{numerical_multi_grid_2D},
we find the cascadic multigrid scheme can obtain
the optimal error estimates as same as the direct eigenvalue solving method for the eigenvalue and the
corresponding eigenfunction approximations. Furthermore, Figure \ref{numerical_multi_grid_2D}
also shows the computational work of Algorithm \ref{Cascadic_MCS_Alg} can arrive the optimality.

We also check the convergence behavior for multi eigenvalue approximations with Algorithm
\ref{Cascadic_MCS_Alg}. Here the first six eigenvalues
$\lambda=2\pi^2$, $5\pi^2$, $5\pi^2$, $8\pi^2$, $10\pi^2$, $10\pi^2$
are investigated. We also adopt the meshes shown in Figure \ref{Initial_Mesh} as
the initial mesh and the corresponding numerical results are shown
in Figure \ref{numerical_multi_grid_2D_6}.
Figure \ref{numerical_multi_grid_2D_6} also exhibits the
optimal convergence and complexity of the cascadic multigrid scheme.
\begin{figure}[htb]
\centering
\includegraphics[width=6cm,height=5cm]{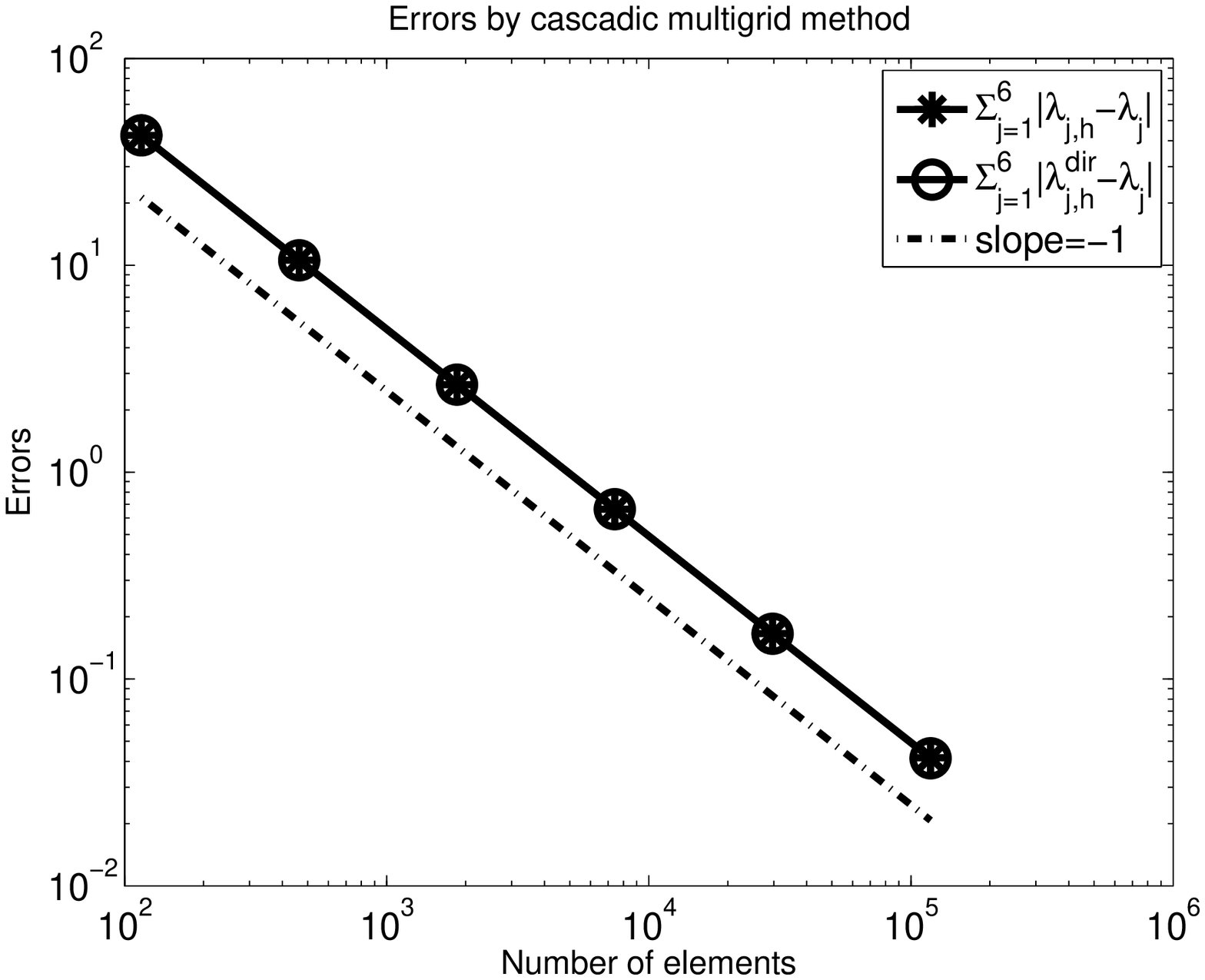}
\includegraphics[width=6cm,height=5cm]{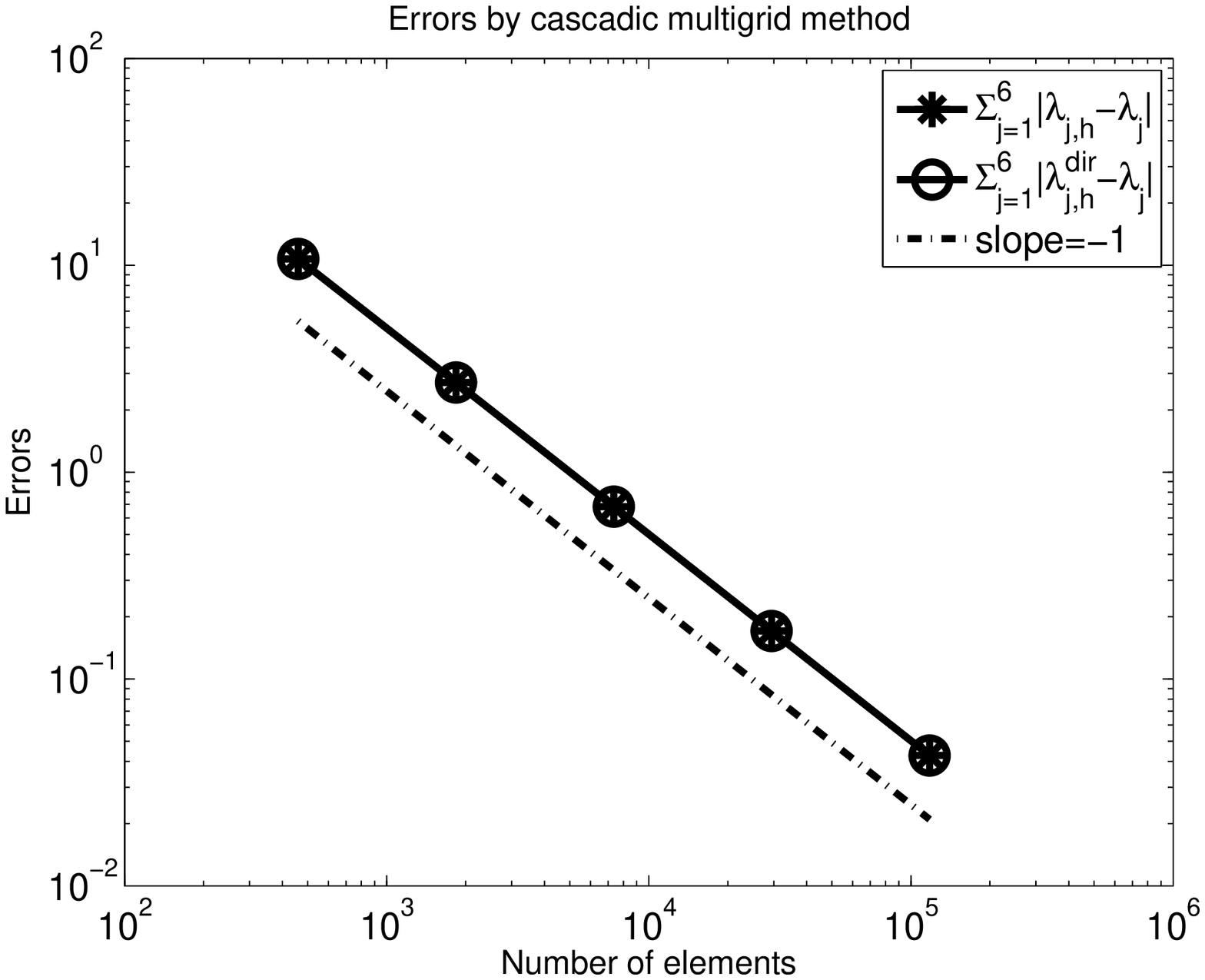}
\caption{\small\texttt The errors of the multigrid
algorithm for the first six eigenvalues on the unit square, where $u_h$ and $\lambda_h$ denote the eigenfunction and eigenvalue approximation by Algorithm \ref{Cascadic_MCS_Alg}, and $u_h^{\rm dir}$ and $\lambda_h^{\rm dir}$ denote the eigenfunction
 and eigenvalue approximation by direct eigenvalue solving
 (The left figure corresponds to the left mesh in Figure \ref{Initial_Mesh} and
the right figure corresponds to the right right mesh in Figure \ref{Initial_Mesh})}
\label{numerical_multi_grid_2D_6}
\end{figure}

\subsection{More general eigenvalue problem}
Here we give the numerical results of the cascadic multigrid
scheme for solving a more general eigenvalue problem on the unit square
domain $\Omega=(0, 1)\times (0, 1)$: Find $(\lambda,u)$ such that
\begin{equation}\label{Example_2}
\left\{
\begin{array}{rcl}
-\nabla\cdot\mathcal{A}\nabla u+\phi u&=&\lambda\rho u,\quad{\rm in}\ \Omega,\\
u&=&0,\quad\ \  {\rm on}\ \partial\Omega,\\
\int_{\Omega}\rho u^2d\Omega&=&1,
\end{array}
\right.
\end{equation}
where
\begin{equation*}
\mathcal{A}=\left (
\begin{array}{cc}
$$1+(x_1-\frac{1}{2})^2$$&$$(x_1-\frac{1}{2})(x_2-\frac{1}{2})$$\\
$$(x_1-\frac{1}{2})(x_2-\frac{1}{2})$$&$$1+(x_2-\frac{1}{2})^2$$
\end{array}
\right),
\end{equation*}
$\varphi=e^{(x_1-\frac{1}{2})(x_2-\frac{1}{2})}$ and
$\rho=1+(x_1-\frac{1}{2})(x_2-\frac{1}{2})$.

In this example, we also use two coarse meshes which are shown in Figure \ref{Initial_Mesh}
as the initial meshes to investigate the convergence behaviors.
Since the exact solution is not known, we choose an adequately accurate eigenvalue
approximations with the extrapolation method (see, e.g., \cite{LinLin}) as the exact
eigenvalues to measure errors.
Figure \ref{numerical_multi_grid_Exam_2} gives the corresponding
numerical results for the first six eigenvalue approximations.
Here we also compare the numerical results with the direct algorithm.
Figure \ref{numerical_multi_grid_Exam_2} also exhibits the optimality of the error and complexity for
Algorithm \ref{Cascadic_MCS_Alg}.
\begin{figure}[htb]
\centering
\includegraphics[width=6cm,height=5cm]{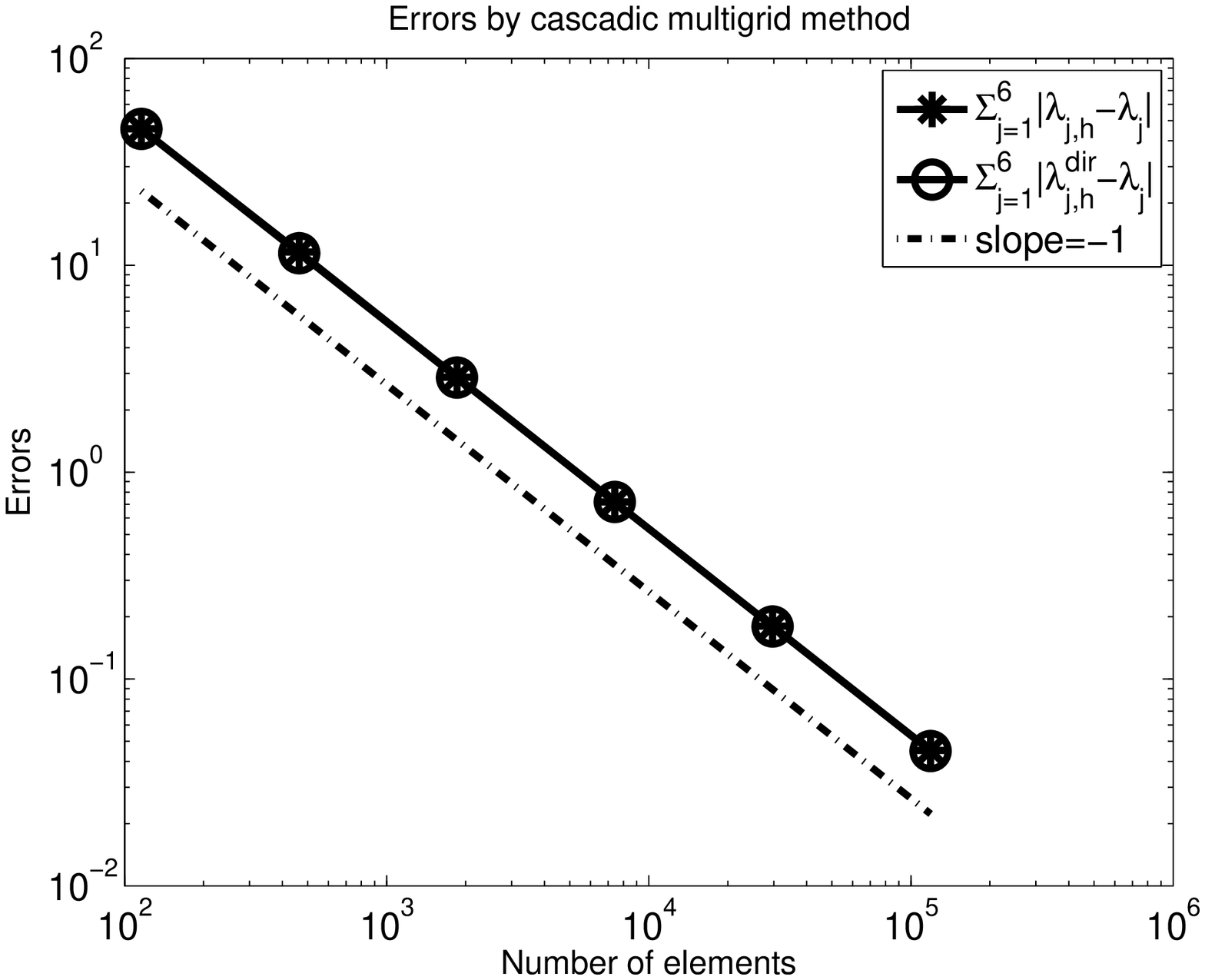}
\includegraphics[width=6cm,height=5cm]{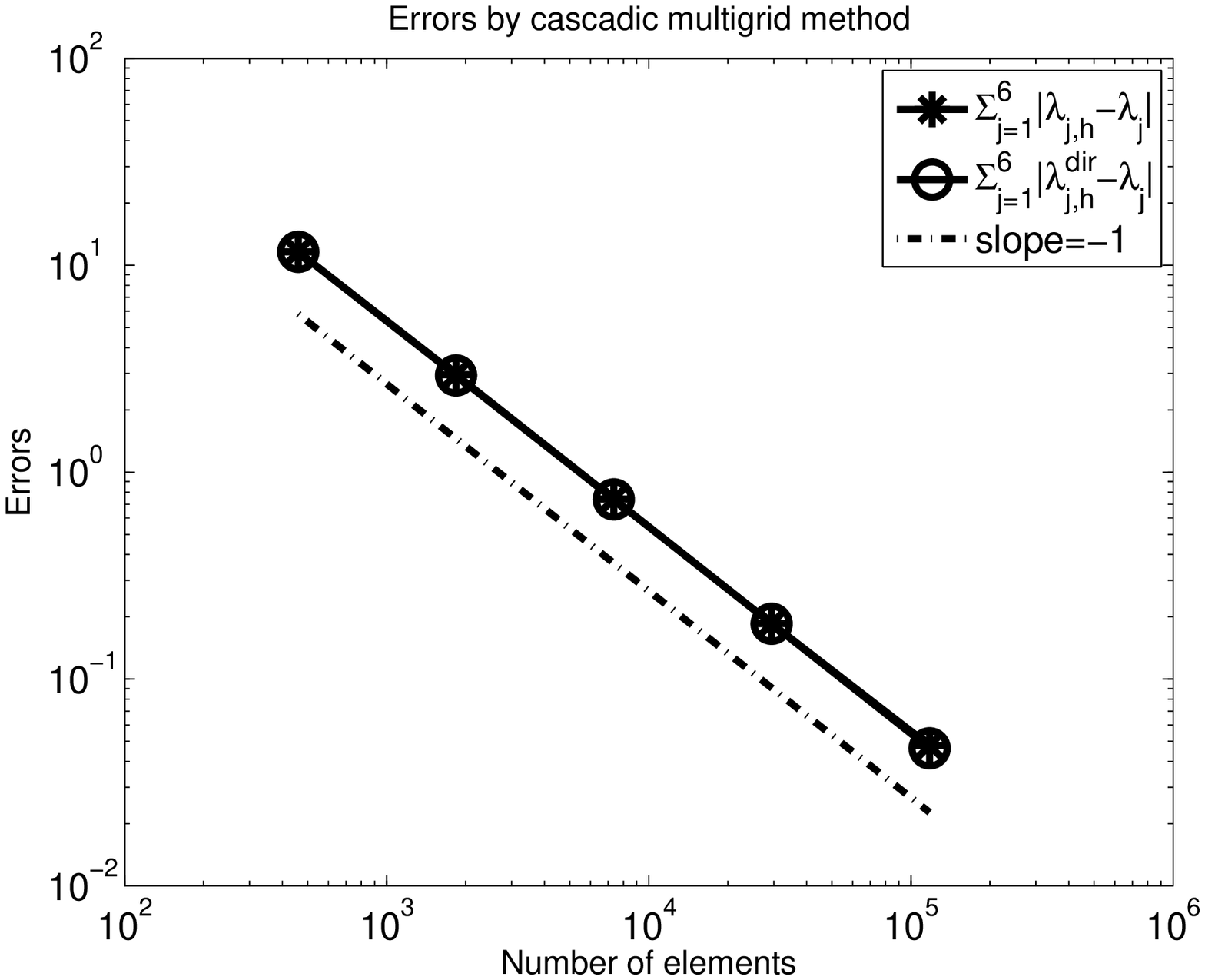}
\caption{\small\texttt The errors of the multigrid
algorithm for the first eigenvalue $3\pi^2$ and the corresponding eigenfunction,
where $u_h$ and $\lambda_h$ denote the eigenfunction and eigenvalue approximation by Algorithm \ref{Cascadic_MCS_Alg}, and $u_h^{\rm dir}$ and $\lambda_h^{\rm dir}$ denote the eigenfunction
 and eigenvalue approximation by direct eigenvalue solving
(The left figure corresponds to the left mesh in Figure \ref{Initial_Mesh} and
the right figure corresponds to the right right mesh in Figure \ref{Initial_Mesh})} \label{numerical_multi_grid_Exam_2}
\end{figure}

\section{Concluding remarks}

In this paper, we present a type of cascadic multigrid method for eigenvalue problems
based on the combination of the cascadic multigrid for boundary value problems and the multilevel correction
scheme for eigenvalue problems. The optimality of the computational efficiency
has been demonstrated by theoretical analysis and numerical examples.
As shown in the numerical examples, the cascadic multigrid method can also be used
to obtain the multiple eigenpair approximations of the eigenvalue problem(cf. \cite{Xie_IMA,Xie_JCP}).
Furthermore, the proposed
 cascadic multigrid method can be extended to more general eigenvalue problems.

\bibliographystyle{plain}

\end{document}